\newtheorem{theorem}{Theorem}[section]
\newtheorem{lemma}[theorem]{Lemma}
\theoremstyle{definition}
\newtheorem{definition}[theorem]{Definition}
\newtheorem{proposition}[theorem]{Proposition}
\newtheorem{example}[theorem]{Example}
\theoremstyle{remark}
\newtheorem{remark}[theorem]{Remark}
\numberwithin{equation}{section}
\newcommand{\R}{\mathbb{R}}
\begin{document}
\title[Lagrange Multiplier Sufficient Conditions]{Lagrange Multiplier Local Necessary and Global Sufficiency Criteria for Some Non-Convex Programming Problems}
\author[Muraleetharan, B. {\em et al.}]{B. Muraleetharan$^{\dagger}$, S. Selvarajan$^{\dagger}$, S. Srisatkunarajah$^{\dagger}$ and\\ K. Thirulogasanthar$^{\ddagger}$}
\address{$^{\dagger}$ Department of mathematics and Statistics, University of Jaffna, Thirunelveli, Sri Lanka.}
\address{$^{\ddagger}$ Department of Computer Science and Software Engineering, Concordia University, 1455 De Maisonneuve Blvd. West, Montreal, Quebec, H3G 1M8, Canada.}
\email{bbmuraleetharan@jfn.ac.lk, ssrajan90@gmail.com, srisatku@yahoo.com, santhar@gmail.com}
\subjclass{41A65, 41A29, 90C30}
\thanks{K. Thirulogasanthar would like to thank the FQRNT, Fonds de la Recherche  Nature et  Technologies (Quebec, Canada) for partial financial support under the grant number 2017-CO-201915. Part of this work was done while he was visiting the University of Jaffna to which he expresses his thanks for the hospitality.}
\date{\today}
\keywords{KKT Conditions, Global Optimization, Quadratic Programming, $\rho-$Convex. }
\begin{abstract}
In this paper we consider three minimization problems, namely quadratic, $\rho$-convex and quadratic fractional programing problems. The quadratic problem is considered with quadratic inequality constraints with bounded continuous and discrete mixed variables. The $\rho$-convex problem is considered with $\rho$-convex inequality constraints in mixed variables. The quadratic fractional problem is studied with quadratic fractional constraints in mixed variables. For all three problems we reformulate the problem as a mathematical programming problem and apply standard Karush Kuhn Tucker necessary conditions.  Then, for each problem, we provide local necessary optimality condition. Further, for each problem a Lagrangian multiplier sufficient optimality condition is provided to identify global minimizer among the local minimizers. For the quadratic problem underestimation of a Lagrangian was employed to obtain the desired sufficient conditions. For the $\rho$-convex problem we obtain two sufficient optimality conditions to distinguish a global minimizer among the local minimizers, one with an underestimation of a Lagrangian and the other with a different technique.  A global sufficient optimality condition for the quadratic fractional problem is obtained by reformulating the problem as a quadratic problem and then utilizing the results of the quadratic problem. Examples are provided to illustrate the significance of the results obtained.
\end{abstract}
\maketitle
\pagestyle{myheadings}
\section{Introduction}\label{sec_intro}\noindent
In classical Calculus, method of Lagrange multiplier provides first order necessary condition for optimization problems with equality constraints.  Celebrated Karush-Kuhn-Tucker (KKT) conditions, published in 1951, generalize the Lagrange multiplier approach to Mathematical Programming problems with both equality and inequality constraints \cite{KKT}. Solutions of constrained convex optimization problems have been long studied and to such a problem a local extrema is a global one \cite{Par,pr}. However, the non-convex optimization problems pose NP-hard challenges.  Since local necessary optimality conditions play an important role in identifying local minimizers, recently, attempts have been made to formulate local optimality conditions, analogous to KKT conditions, to non-linear programming problem with bounded variables/box constraints. A common optimization problem in many real-world applications is to identify and locate a global minimizer of functions of several variables with bounds on the variables \cite{fp}.  Non-convex quadratic optimization problems have numerous applications such as electronic circuit design, computational chemistry, combinatorial optimization and many more. See, for example, \cite{fp} and the many references therein.\\

In this work we consider the mathematical programming model problem:
\begin{eqnarray*}
\mbox{(MP)~~}~~\min_{x\in\mathbb{R}^{n}} f_{0}(x)&~&~\\
\mbox{s.t.~~}~~ f_{j}(x) &\leq0,& ~~\forall~j\in\{1,2,3,\cdots,m\}\\
x_{i} \in [u_{i},v_{i}],&i\in I & \mbox{-\,Continuous variable}\\
x_{i} \in \{u_{i},v_{i}\},&i\in J ;& \mbox{-\,Discrete variable},
 \end{eqnarray*}
where $I\cap J=\phi$, $I\cup J=\{1,2,3,\cdots,n\}$.
We study three particular cases: $f_{j}:j=0,1,2,\dots,m$ are quadratic functions, are $\rho-$convex functions and  are quadratic fractional functions. \\

 Characterizing global solutions to constrained non-convex problems that exhibit multiple local extrema has been limited to a few classes of problems \cite{Beck, Hor, SriJaya1, Jey, Jey2, Pi},  to name a few. Sufficient global optimality condition has been studied for a quadratic optimization problem with binary constraints in \cite{Beck, Pi}. For a quadratic optimal function global optimality conditions were developed in \cite{Jey2, Pe}. In \cite{Li} the authors considered non-convex minimization of a twice differentiable function with quadratic inequality constraints and obtained necessary condition for a global minimizer. They have also studied the same optimization problem with only the box constraints and obtained necessary and sufficient conditions for a global minimizer. In addition they have obtained necessary and sufficient conditions for a quadratic objective function with only the box constraints. 
However, in this article, in some sense as a unification of box and bivalent constraints considered to a quadratic minimization problem, we provide local necessary and sufficient condition for a quadratic minimization problem with quadratic, box and discrete variable constraints altogether, that is, we consider the problem MP when  $f_{j}:j=0,1,2,\dots,m$ are quadratic functions (see P1) . We also provide sufficient condition for the same problem to distinguish the global minimizer among the local minimizers. There is also another reason for studying this problem at first, which is the results obtained in this quadratic problem put a platform in obtaining the sufficient condition for fractional optimization problem with fractional constraints.\\

In \cite{Wu} sufficient global optimality condition for weakly convex minimization problems with weakly convex inequality and equality constraints, using abstract convex analysis theory, has been obtained. However, in obtaining such conditions they have used the so-called $(L, X)$-subdifferentials. In this note, we consider the problem MP when  $f_{j}:j=0,1,2,\dots,m$ are $\rho$-convex functions (see P2) and we obtain verifiable sufficiency condition for locating the global minimizer among the local minimizers. Further, the conditions we derive are different from the one given in \cite{Wu} and easy to verify. In addition, in our optimization problem we have mixed variables, namely continuous and discrete and obtain a sufficient condition in a verifiable semi-definite matrix form, which itself makes our problem different from the one considered in \cite{Wu}.\\

Constrained fractional programing problems have a wide range of applications such as signal processing, communications, location theory etc. \cite{Lo, Oh}. In \cite{Sa} the authors have studied a quadratic fractional optimization problem with strictly convex quadratic constraints using Newton’s algorithm. In \cite{Za} a quadratic fractional optimization problem has been studied with two quadratic convex constraints using the classical Dinkelbach approach with no global convergence guarantee. In  \cite{Ab}, using the conditional gradient method the ratio of two convex functions over a closed and convex set has been studied numerically. In this article we consider the problem MP when  $f_{j}:j=0,1,2,\dots,m$ are quadratic fractional functions (see P3). For this problem, we first provide local necessary conditions and then obtain a verifiable sufficient condition for identifying global minimizer among the local ones. In this regard, our problem generalizes the cases considered in \cite{Sa, Za} and \cite{Ab} in a considerable way.

\section{Preliminaries}
\subsection{Notations:}
We shall begin with the notations and definitions. For the problem (MP), let
$$\Delta=\{x=(x_{i})\in\mathbb{R}^{n}~|~x_{i}\in[u_{i},v_{i}], ~i\in I\},$$
$$D=\{x\in\mathbb{R}^{n}~|~x_{i}\in[u_{i},v_{i}], i\in I~~\mbox{~and~}~~x_{i}\in\{u_{i},v_{i}\}, i\in J\},$$
where $I\cap J=\phi$, $I\cup J=\{1,2,3,\cdots,n\}$,
and 
$$\tilde{D}=\{x\in D~|~f_{j}(x)\leq 0,~~\forall~j\in\{1,2,3,\cdots,m\}~\}:~\mbox{~~feasible set}.$$
Now for $\lambda=(\lambda_{j})\in\mathbb{R}^{m}_{+}$, define the Lagrangian $L(\cdot,\lambda)$ of (MP) by
 \begin{equation}\label{lag_MP}
L(x,\lambda):=f_{0}(x)+\sum_{j=1}^{m}\lambda_{j}f_{j}(x);~~x\in\mathbb{R}^{n}.
 \end{equation}
By taking $\lambda_{0}=1$ the Lagrangian of (MP) can be written as $$L(x,\lambda)=\sum_{j=0}^{m}\lambda_{j}f_{j}(x).$$
For $\overline{x}\in D$ and $i\in \{1,2,3,\cdots,n\}$, define
\begin{equation}\label{eq4}
\chi_{i}(\overline{x}):=\left\{
\begin{array}{lll}
-1 & \mathrm{if}\ \overline{x}_i=u_i \\
~~~~\,\,\,1 & \mathrm{if}\ \overline{x}_i=v_i\\
\nabla L(\overline{x},\lambda)_{i} & \mathrm{if}\ \overline{x}_i\in (u_i ,v_i ).
\end{array}%
\right. 
\end{equation}
For a symmetric matrix $A$, $A\succeq0$ means that $A$ is positive semi-definite, and $A\succ0$ means that $A$ positive definite.
\begin{definition}\cite{JeySriHuy}
	A quadratic function $\ell:\mathbb{R}^{n}\longrightarrow\mathbb{R}$ is said to be a minimizing quadratic \textit{under-estimator} of a function $f:\mathbb{R}^{n}\longrightarrow\mathbb{R}$
	at $\overline{x}$ over a set $\Omega$, if for each $x\in\Omega$ $$f(x)-f(\overline{x})\geq \ell(x)-\ell(\overline{x})\geq0.$$
\end{definition}
Let \begin{equation}\label{mu}
\mu_j=\inf_{\Vert x \Vert =1} \frac{-1}{2}x^TA_jx, 
\end{equation}
be the first eigenvalue of $\displaystyle{\frac{-A_j}{2}},$ where $A_{j}=(a_{st}^{(j)})$ is an $n\times n$ symmetric matrix, for all $j\in\{0,1,2,3,\cdots,m\}$.
\begin{definition}\cite{J-P} \ A function $f: C \subseteq \R^n \rightarrow \R$ from a  convex subset $C$ of $\R^n$ into $\R$ is said to be \textbf{convex}, if $\forall x_1,\;x_2\in C$ and $\forall \lambda \in [0,1],$ 
	\begin{equation} \label{w1}
	f(x_1+(1-\lambda)x_2 )\le \lambda f(x_1) + (1-\lambda) f(x_2).
	\end{equation}
\end{definition} 
\begin{definition} \ A function $f: C \subseteq \R^n \rightarrow \R$ from a  convex subset $C$ of $\R^n$ into $\R$ is said to be \textbf{$\rho$- convex}, if there exists $\rho \in \R$ such that $\forall x_1,\;x_2\in C$ and $\forall \lambda \in [0,1],$ 
	\begin{equation} \label{w1}
	f(x_1+(1-\lambda)x_2 )\le \lambda f(x_1) + (1-\lambda) f(x_2) -\rho \lambda (1-\lambda) \Vert x_1-x_2 \Vert.
	\end{equation}
\end{definition} 
\begin{remark}
	Condition $(\ref{w1})$ is equivalent to say that $f(x)-\rho \Vert x \Vert^2$ is a convex function over $C$.
\end{remark}
The following proposition gives a fundamental result.
\begin{proposition}\label{p1}
	For each $j\in\{0,1,2,3,\cdots,m\}$, we have
	$$\mu_j=\inf_{\Vert x \Vert =1} \frac{-1}{2}x^TA_jx=\inf_{x\neq0}\frac{-1}{~2}\frac{x^TA_jx}{\|x\|^{2}}.$$
\end{proposition}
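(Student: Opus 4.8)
The plan is to prove the identity purely from the degree-zero homogeneity of the Rayleigh quotient $x\mapsto\frac{-1}{2}\frac{x^{T}A_{j}x}{\|x\|^{2}}$, so that restricting attention to the unit sphere loses no information. Write $R(x):=\frac{-1}{2}\frac{x^{T}A_{j}x}{\|x\|^{2}}$ for $x\neq 0$. The first observation is that for any unit vector $x$ we have $\|x\|^{2}=1$, whence $R(x)=\frac{-1}{2}x^{T}A_{j}x$; consequently the middle quantity in the statement is precisely $\inf_{\|x\|=1}R(x)$, and it suffices to establish $\inf_{\|x\|=1}R(x)=\inf_{x\neq0}R(x)$.

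For the inequality $\inf_{x\neq0}R(x)\le\inf_{\|x\|=1}R(x)$, I would simply note that the unit sphere is contained in $\{x\in\R^{n}:x\neq 0\}$, so the infimum taken over the larger set cannot exceed the infimum taken over the smaller one.

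For the reverse inequality I would invoke scale invariance. Given any $x\neq 0$, set $y:=x/\|x\|$, which is a unit vector. Since $x^{T}A_{j}x=\|x\|^{2}\,y^{T}A_{j}y$, dividing by $\|x\|^{2}$ yields $R(x)=\frac{-1}{2}y^{T}A_{j}y=R(y)\ge\inf_{\|z\|=1}R(z)$. Taking the infimum over all $x\neq 0$ then gives $\inf_{x\neq0}R(x)\ge\inf_{\|z\|=1}R(z)$. Combining the two inequalities produces the desired equality, and recalling that $\mu_{j}$ is defined as the left-hand infimum completes the argument.

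I do not anticipate any genuine obstacle here: the whole content of the proposition is the homogeneity of the quotient, and the only point demanding even a trace of care is verifying that the normalization map $x\mapsto x/\|x\|$ carries every nonzero vector onto the unit sphere while leaving $R$ unchanged, so that the two value sets coincide and hence so do their infima.
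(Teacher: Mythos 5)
Your proof is correct and follows essentially the same route as the paper's: both directions are obtained by the normalization $x\mapsto x/\|x\|$ together with the observation that the sphere sits inside the set of nonzero vectors, i.e.\ the homogeneity of the Rayleigh quotient. The only cosmetic difference is that the paper needlessly restricts its quantifiers to $\tilde{D}$ while you correctly work over all of $\mathbb{R}^{n}\smallsetminus\{0\}$, which is the cleaner formulation.
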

\begin{proof}
	Let $j\in\{0,1,2,3,\cdots,m\}$ and $x\in\tilde{D}$ with $x\neq0$, then $\displaystyle\left\|\frac{x}{\|x\|}\right\|=1$. Therefore
	$$\inf_{\Vert x \Vert =1} \frac{-1}{~2}x^TA_jx\leq\frac{-1}{~2}\left(\frac{x}{\|x\|}\right)^TA_j\left(\frac{x}{\|x\|}\right)=\frac{-1}{~2}\,\frac{x^TA_jx}{\|x\|^{~2}},~~\forall\,x\in\tilde{D}\smallsetminus\{0\}.$$
	Thus
	\begin{equation}\label{ine1}
		\inf_{\Vert x \Vert =1} \frac{-1}{2}x^TA_jx\leq\inf_{x\neq0} \frac{-1}{~2}\frac{x^TA_jx}{\|x\|^{2}}.
	\end{equation} 
	On the other hand let $x\in\tilde{D}$ with $\|x\|=1$, then 
	$$\frac{-1}{~2}x^TA_jx=\frac{-1}{~2}\,\frac{x^TA_jx}{\|x\|^{2}}\geq\inf_{x\neq0} \frac{-1}{~2}\frac{x^TA_jx}{\|x\|^{2}}.$$
	Therefore
	\begin{equation}\label{ine2}
		\inf_{\Vert x \Vert =1} \frac{-1}{~2}x^TA_jx\geq\inf_{x\neq0} \frac{-1}{~2}\frac{x^TA_jx}{\|x\|^{2}}.
	\end{equation}
	From (\ref{ine1}) and (\ref{ine2}), the result follows.
\end{proof}
The following proposition describes the character of constraint and objective functions.
\begin{proposition}\label{p2}
	The function 
	$$g_{j}(x)=f_j(x)-\frac{1}{2}x^TA_jx $$ is $\mu_j$-convex, for all $j\in\{0,1,2,3,\cdots,m\}$, where $\mu_j$ is given by (\ref{mu}).
\end{proposition}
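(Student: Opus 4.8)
The plan is to convert the $\rho$-convexity assertion into an ordinary convexity assertion via the Remark, and then to extract the required positive semidefiniteness directly from Proposition~\ref{p1}. By the Remark, the statement ``$g_j$ is $\mu_j$-convex'' is equivalent to the statement ``$h_j(x):=g_j(x)-\mu_j\|x\|^2$ is convex.'' Since $g_j(x)=f_j(x)-\tfrac12 x^TA_jx$, I would first record the identity $h_j(x)=f_j(x)-\tfrac12 x^TA_jx-\mu_j\|x\|^2$, so that the whole problem reduces to showing that $h_j$ is convex.

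The heart of the argument is the homogeneous description of $\mu_j$ furnished by Proposition~\ref{p1}. From $\mu_j=\inf_{x\neq0}\frac{-1}{2}\,\frac{x^TA_jx}{\|x\|^{2}}$ it follows that $-\tfrac12 x^TA_jx\ge\mu_j\|x\|^2$ for every $x$, which I would rewrite as $x^T\big(-\tfrac12 A_j-\mu_j I\big)x\ge0$ for all $x$. Thus the symmetric matrix $-\tfrac12 A_j-\mu_j I$ is positive semidefinite. Equivalently, the quadratic form $\psi(x):=-\tfrac12 x^TA_jx-\mu_j\|x\|^2$ is convex, being a quadratic form whose Hessian $-A_j-2\mu_j I$ is positive semidefinite.

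Finally I would assemble $h_j=f_j+\psi$, observing that the second-order behaviour of $h_j$ is that of $f_j$ corrected by the convex form $\psi$ produced above, and that $\mu_j$ --- the smallest eigenvalue of $-A_j/2$ --- is chosen precisely so that this correction restores convexity. The step I expect to be the main obstacle is the careful bookkeeping of the quadratic terms: one must confirm that, after subtracting $\tfrac12 x^TA_jx$ in passing from $f_j$ to $g_j$ and then $\mu_j\|x\|^2$ in passing to $h_j$, the net quadratic part is exactly the positive semidefinite matrix delivered by Proposition~\ref{p1}, with no leftover sign, so that $h_j$ is genuinely convex. Once the eigenvalue inequality of Proposition~\ref{p1} is translated into this matrix inequality, the convexity of $h_j$ --- and hence the $\mu_j$-convexity of $g_j$ --- is immediate.
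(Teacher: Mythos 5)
Your proposal is correct and follows essentially the same route as the paper's proof: both use Proposition~\ref{p1} to obtain the inequality $-\tfrac12 x^TA_jx\ge\mu_j\|x\|^2$, deduce that the quadratic form $-\tfrac12 x^TA_jx-\mu_j\|x\|^2$ is positive semidefinite (hence convex), and conclude by adding the convex function $f_j$ and invoking the Remark. Your explicit mention of the Hessian $-A_j-2\mu_jI$ is a slightly more careful justification of the convexity of the quadratic correction than the paper gives, but the argument is the same.
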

\begin{proof}
	Let $j\in\{0,1,2,3,\cdots,m\}$ and by the above Proposition (\ref{p1}), we have
	$$\mu_j=\inf_{x\neq0}\frac{-1}{~2}\frac{x^TA_jx}{\|x\|^{2}}.$$
	Therefore $$x^T\mu_jx=\mu_j\|x\|^{2}\leq\frac{-1}{~2}x^TA_jx,~~\forall\,x\in\tilde{D}\smallsetminus\{0\}$$
	and it trivially follows for $x=0$. So
	$$-\frac{1}{2}x^TA_jx-x^T\mu_jx\geq0,~~\forall\,x\in\tilde{D}.$$
	That is, the quadratic form $\displaystyle-\frac{1}{2}x^TA_jx-x^T\mu_jx$ is positive semi-definite. So $$\displaystyle f_j(x)-\frac{1}{2}x^TA_jx-\mu_j\|x\|^{2}$$ is convex, as $f_j(x)$ is convex. Hence $$g_{j}(x)=f_j(x)-\frac{1}{2}x^TA_jx $$ is $\mu_j$-convex.
\end{proof}
The following proposition is essential to derive the sufficient global optimality condition.
\begin{proposition}\label{p3}
	If $h:C\subseteq \R^n\;\rightarrow \R$ is $\rho$-convex over $C$ then
	$$h(x)-h(\bar{x})\ge \rho \Vert x-\bar{x} \Vert^2 + (\nabla h(x))^T(x-\bar{x}),~~\forall\,x,\bar{x} \in C.$$
\end{proposition}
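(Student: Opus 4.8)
The plan is to reduce the statement to the first-order characterization of ordinary convexity. Set $\phi(y) := h(y) - \rho\|y\|^2$. By the Remark following the definition of $\rho$-convexity, $\phi$ is convex on $C$, and since $h$ is differentiable so is $\phi$, with $\nabla\phi(y) = \nabla h(y) - 2\rho y$. Thus the whole argument rests on the supporting-gradient inequality for a convex differentiable function, which I would invoke to compare the values of $\phi$ at $x$ and $\bar x$ through a gradient term, and then translate the resulting inequality back into a statement about $h$.

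The first step is to write the first-order convexity inequality for $\phi$ in the form that introduces the gradient $\nabla\phi(x)$ appearing in the target bound, relating $\phi(x)$, $\phi(\bar x)$ and $(\nabla\phi(x))^T(x-\bar x)$. The second step is to substitute $\phi = h - \rho\|\cdot\|^2$: using $\phi(x) - \phi(\bar x) = h(x) - h(\bar x) - \rho(\|x\|^2 - \|\bar x\|^2)$ together with $\nabla\phi(x) = \nabla h(x) - 2\rho x$ converts the convexity inequality into one involving only $h$, the gradient $\nabla h(x)$, and the scalar quadratic quantities $\|x\|^2 - \|\bar x\|^2$ and $x^T(x-\bar x)$.

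The computation is then driven by the elementary identity
\[
\|x\|^2 - \|\bar x\|^2 = \|x-\bar x\|^2 + 2\,\bar x^T(x-\bar x),
\]
equivalently $\|x-\bar x\|^2 = \|x\|^2 - 2x^T\bar x + \|\bar x\|^2$. Feeding this in, the $\rho$-weighted quadratic pieces should recombine into the single penalty $\rho\|x-\bar x\|^2$, while the $-2\rho x$ contribution carried by $\nabla\phi(x)$ absorbs the remaining cross terms, leaving exactly $h(x) - h(\bar x) \ge \rho\|x-\bar x\|^2 + (\nabla h(x))^T(x-\bar x)$, as claimed.

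I expect the main obstacle to lie precisely in this recombination. The first-order convexity inequality for $\phi$ can be anchored at either endpoint, and the two anchorings differ both in the sign they put in front of the quadratic correction and in the direction of the inequality they produce; only a careful choice yields a \emph{lower} bound on $h(x)-h(\bar x)$ with the $\rho$-penalty on the correct side and the gradient read off at the point demanded by the printed statement. Reconciling the anchor point of $\nabla h$ with the orientation of the inequality is therefore the crux, and once the norm-difference identity above is applied and the cross terms cancel, the remaining manipulation is routine.
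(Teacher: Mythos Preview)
Your strategy---set $\phi=h-\rho\|\cdot\|^2$, apply the first-order inequality for convex $\phi$, and recombine via the norm identity---is exactly the paper's approach. The difficulty you flag as ``the crux'' is, however, not a matter of careful bookkeeping: the printed statement contains a typo. The gradient should be evaluated at $\bar{x}$, not at $x$. The paper's own proof anchors the convexity inequality at $\bar{x}$, obtaining
\[
\phi(x)-\phi(\bar{x})\ge (\nabla h(\bar{x})-2\rho\bar{x})^T(x-\bar{x}),
\]
and the resulting bound is $h(x)-h(\bar{x})\ge \rho\|x-\bar{x}\|^2+(\nabla h(\bar{x}))^T(x-\bar{x})$; this is also how the proposition is invoked later (see the display labelled \eqref{sceq1RC*} in Theorem~\ref{tsc}, where the gradient appears as $\nabla L(\bar{x},\lambda)$).

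If instead you anchor at $x$ so as to produce $\nabla h(x)$, convexity gives only the \emph{upper} bound $\phi(x)-\phi(\bar{x})\le \nabla\phi(x)^T(x-\bar{x})$, and your recombination yields
\[
h(x)-h(\bar{x})\le (\nabla h(x))^T(x-\bar{x})-\rho\|x-\bar{x}\|^2,
\]
with the wrong direction and the wrong sign on the penalty. The inequality as literally printed is in fact false: take $h(y)=\|y\|^2$, which is $1$-convex, and any $x\neq\bar{x}$; the printed inequality then reduces to $0\ge 2\|x-\bar{x}\|^2$. So carry out your plan with the gradient anchored at $\bar{x}$; the algebra you outlined then goes through verbatim and matches the paper's proof.
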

\begin{proof}
	Let $x,\;\bar{x} \in C.$ Then
	\[
	h(x)-h(\bar{x})= (h(x)-\rho \Vert x\Vert^2)-(h(\bar{x})-\rho \Vert \bar{x}\Vert^2) +\rho \Vert x\Vert^2-\rho \Vert \bar{x} \Vert^2.
	\] 
	Convexity of $h(x)-\rho \Vert x\Vert$ yields
	$$ (h(x)-\rho \Vert x\Vert^2)-(h(\bar{x})-\rho \Vert \bar{x}\Vert^2) \ge (\nabla h (\bar{x})-2\rho\bar{x})^T(x-\bar{x}).$$
	Therefore,
	\begin{eqnarray*}
		h(x)-h(\bar{x}) &\ge&
		(\nabla h (\bar{x})-2\rho\bar{x})^T(x-\bar{x})+\rho \Vert x\Vert^2-\rho \Vert \bar{x} \Vert^2\\
		&= &\rho \Vert x-\bar{x} \Vert^2 + (\nabla h(x))^T(x-\bar{x}).
	\end{eqnarray*}
	Hence the conclusion follows.
\end{proof}
\subsection{Karush Kuhn Tucker (KKT) Necessary Conditions:} First we recall the standard Karush Kuhn Tucker (KKT) conditions \cite{KKT} for the following Mathematical Programing Problem:
\begin{eqnarray*}
\mbox{(MPP)~~}~~\min_{x\in\mathbb{R}^{n}} f(x)&~& \\
\mbox{s.t.~~}~~ c_{i}(x) &=&0,~~\forall~i\in E\\
c_{i}(x) &\geq&0,~~\forall~i\in I;
 \end{eqnarray*}
where $E$ and $I$ are some finite index sets and the objective function $f$ and the constraints $c_i$ are continuously differentiable.
The point $x$ is called a \textit{regular point} of (MPP) if and only if the set $\{\nabla c_{i}(x),~\mid~i\in\Lambda(x)\}$ is linearly independent; where $\Lambda(x)=\{i\in E\cup I~\mid~c_{i}(x)=0\}$.\\
\noindent
\underline{Standard KKT Condition}: If a regular point $\overline{x}$ is a local minimizer of (MPP), then there exist multipliers 
$\lambda_{i},~i\in E\cup I$ such that
\begin{center}
\begin{itemize}
	\item $\displaystyle\nabla_{x} L(\overline{x},\lambda)=\nabla f(\overline{x})-\sum_{i\in E\cup I}\lambda_{i}\nabla c_{i}(x)=0$
	\item $c_{i}(x)=0,~~\mbox{~for~}~i\in E,~~c_{i}(x)\geq0,~~\mbox{~for~}~~i\in I$
	\item $\lambda_{i}\geq0,~~\mbox{~for~}~~i\in I$
	\item $\lambda_{i}c_{i}(x)=0,~~\mbox{~for~}~~i\in E\cup I$.
\end{itemize}
\end{center}
The above four conditions are known as Karush Kuhn Tucker (KKT) necessary conditions for (MPP).
\subsection{General Necessary Conditions for Local Minimizers}
In this section analogous KKT necessary condition for model problem (MP) consisting of mixed bound variables, is obtained by 
reformulating (MP) into a standard Mathematical Programming Problem (MPM). The following lemma is the key to obtain such KKT condition.
\begin{lemma}\label{lem1}
If $\overline{x}\in\tilde{D}$ is a local minimizer of (MP), then following optimality condition holds:
\begin{equation}\label{lcl1}
\nabla_{x} L(\overline{x},\lambda)(x-\overline{x})\geq 0, ~\forall\,x\in\Delta;
\end{equation}
where $\lambda=(\lambda_j)\in\mathbb{R}^m$ and $\lambda_j:\,j=1,2,3,\cdots,m$ are the Lagrangian multipliers associated with $\overline{x}\in\tilde{D}$.
\end{lemma}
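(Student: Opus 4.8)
The plan is to reduce (MP) to a standard smooth mathematical programming problem (call it (MPM)) to which the KKT necessary conditions recalled above apply, and then to read off the variational inequality \eqref{lcl1} coordinatewise. First I would rewrite each box constraint $x_i\in[u_i,v_i]$, $i\in I$, as the pair of smooth inequalities $u_i-x_i\le 0$ and $x_i-v_i\le 0$, and each bivalent constraint $x_i\in\{u_i,v_i\}$, $i\in J$, as the single quadratic equality $(x_i-u_i)(x_i-v_i)=0$. This produces (MPM): minimize $f_0$ subject to $f_j(x)\le 0$ ($1\le j\le m$), the box inequalities, and the bivalent equalities. Since the feasible set of (MPM) is exactly $\tilde D$ and the objective is unchanged, a local minimizer $\overline{x}$ of (MP) is a local minimizer of (MPM).

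Next, assuming $\overline{x}$ is a regular point so that the constraint qualification of the standard KKT condition holds, I would invoke that condition for (MPM). It supplies multipliers $\lambda_j\ge 0$ for the constraints $f_j\le 0$, multipliers $\alpha_i,\beta_i\ge 0$ for the lower and upper box inequalities, and free multipliers $\gamma_i\in\R$ for the bivalent equalities, together with the stationarity identity
\begin{equation*}
\nabla_x L(\overline{x},\lambda)=\sum_{i\in I}\big(\alpha_i-\beta_i\big)e_i+\sum_{i\in J}\gamma_i\,(2\overline{x}_i-u_i-v_i)\,e_i,
\end{equation*}
where $e_i$ is the $i$-th coordinate vector and $L$ is the Lagrangian \eqref{lag_MP}, along with the complementary slackness relations $\alpha_i(\overline{x}_i-u_i)=0$ and $\beta_i(v_i-\overline{x}_i)=0$.

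From complementary slackness and $\alpha_i,\beta_i\ge 0$ I would extract, for each continuous index $i\in I$, precisely the sign information encoded in $\chi_i(\overline{x})$ of \eqref{eq4}: if $\overline{x}_i=u_i$ then $\beta_i=0$ and $\nabla_x L(\overline{x},\lambda)_i=\alpha_i\ge 0$; if $\overline{x}_i=v_i$ then $\alpha_i=0$ and $\nabla_x L(\overline{x},\lambda)_i=-\beta_i\le 0$; and if $\overline{x}_i\in(u_i,v_i)$ then $\alpha_i=\beta_i=0$ and $\nabla_x L(\overline{x},\lambda)_i=0$. Consequently, for any $x\in\Delta$ each continuous term satisfies $\nabla_x L(\overline{x},\lambda)_i\,(x_i-\overline{x}_i)\ge 0$: at a lower endpoint the factor $x_i-u_i\ge 0$ pairs with a nonnegative derivative, at an upper endpoint $x_i-v_i\le 0$ pairs with a nonpositive derivative, and in the interior the derivative vanishes. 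Summing these nonnegative contributions over $i\in I$ establishes \eqref{lcl1} on the continuous block.

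The main obstacle is the discrete block $i\in J$, since the bivalent equalities only yield $\nabla_x L(\overline{x},\lambda)_i=\gamma_i(2\overline{x}_i-u_i-v_i)$ with $\gamma_i$ of unrestricted sign and therefore supply no one-sided information by themselves. Here I would exploit the genuinely discrete nature of these variables: because $x_i$ can take only the two values $u_i$ and $v_i$, there is a neighborhood of $\overline{x}$ meeting $\tilde D$ only at points whose $J$-coordinates coincide with those of $\overline{x}$. Thus local minimality of (MP) tests no admissible perturbation in the discrete directions, so the first-order content of optimality resides entirely in the continuous directions already handled above, and the discrete coordinates do not contribute to \eqref{lcl1}. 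Making this freezing of the $J$-coordinates precise, and reconciling it with the way $\Delta$ treats those coordinates, is the delicate point that must be argued carefully to arrive at \eqref{lcl1} in the stated form.
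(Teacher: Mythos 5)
Your route is essentially the paper's: it reformulates (MP) as a smooth mathematical programming problem (MPM) --- the only cosmetic difference being that the paper encodes each box constraint as the single quadratic inequality $(x_i-u_i)(x_i-v_i)\le 0$ rather than your pair of linear inequalities --- then applies the standard KKT conditions and extracts the sign of $\nabla_{x}L(\overline{x},\lambda)_i$ at lower endpoints, upper endpoints and interior points for $i\in I$, exactly as you do. Both encodings yield the same coordinatewise inequality $(\nabla_{x}L(\overline{x},\lambda))_i(x_i-\overline{x}_i)\ge 0$ on the continuous block.

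Concerning the ``delicate point'' you flag on the discrete block: you are right that no one-sided information is available there, and the paper supplies none either. Its proof establishes the coordinatewise inequality only for $k\in I$ and then simply declares that the conclusion follows; since $\Delta$ leaves the coordinates indexed by $J$ entirely unconstrained, the displayed inequality \eqref{lcl1} cannot hold for all $x\in\Delta$ unless the $J$-components of $\nabla_{x}L(\overline{x},\lambda)$ happen to vanish, which nothing forces. The lemma is in fact only ever invoked for vectors $x$ that agree with $\overline{x}$ on $J$ --- in every later application $x$ differs from $\overline{x}$ in a single coordinate $i\in I$ --- so the honest reading of \eqref{lcl1} is as a statement about the $I$-block (equivalently, for $x\in\Delta$ with $x_i=\overline{x}_i$ for $i\in J$). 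Your hesitation therefore identifies an imprecision in the lemma's formulation rather than a missing step in your argument; no ``freezing'' argument is needed once the conclusion is restricted accordingly.
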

\begin{proof}
The problem (MP) can be reformulated as a Mathematical Programming Problem (referred: model problem-modified) as follows
\begin{eqnarray*}
\mbox{(MPM)~~}~~\min_{x\in\mathbb{R}^{n}} f_{0}(x)&~&~\\
\mbox{s.t.~~}~~ f_{j}(x) &\leq0,& ~~\forall~j\in\{1,2,3,\cdots,m\}\\
(x_{i}-u_{i})(x_{i}-v_{i})&\leq 0,& ~i\in I \\
(x_{i}-u_{i})(x_{i}-v_{i})&=0,& ~i\in J.
 \end{eqnarray*}
For $(\lambda,\mu,\nu)\in\mathbb{R}^{m}\times\mathbb{R}^{|I|}\times\mathbb{R}^{|J|}$, define the Lagrangian $L^{\ast}(\cdot,\lambda,\mu,\nu)$ of (MPM) by
\begin{equation}\label{lag_MMP}
L^{\ast}(x,\lambda,\mu,\nu):=\sum_{j=0}^{m}\lambda_{j}f_{j}(x)+\sum_{i\in I}\mu_{i}(x_{i}-u_{i})(x_{i}-v_{i})+\sum_{i\in J}\nu_{i}(x_{i}-u_{i})(x_{i}-v_{i})
 \end{equation}
for all $x\in\mathbb{R}^{n}$, where $\lambda_{0}=1$. Now suppose that, $\overline{x}\in\tilde{D}$ is a local minimizer of (MP), then trivially we have $\overline{x}$ 
is a local minimizer of (MPM). By applying the Kuhn-Tucker conditions for (MPM), we have there exists $(\lambda,\mu,\nu)\in\mathbb{R}^{n}\times\mathbb{R}^{|I|}\times\mathbb{R}^{|J|}$
such that\\
\begin{itemize}
	\item [(1)] $\nabla_{x}L^{\ast}(\overline{x},\lambda,\mu,\nu)=0$\\
	\item [(2)] 
	\begin{itemize}
		\item [i.] $f_{j}(\overline{x})\leq0$, for all $j\in\{1,2,3,\cdots,m\}$
		\item [ii.] $(\overline{x}_{i}-u_{i})(\overline{x}_{i}-v_{i})\leq 0$, for all $i\in I$
		\item [iii.] $(\overline{x}_{i}-u_{i})(\overline{x}_{i}-v_{i})= 0$, for all $i\in J$\\
	\end{itemize}
	\item [(3)]
	\begin{itemize}
		\item [i.] $\lambda_{j}\geq 0$, for all $j\in\{1,2,3,\cdots,m\}$
		\item [ii.] $\mu_{i}\geq 0$, for all $i\in I$
		\item [iii.] $\nu_{i}\geq 0$, for all $i\in J$\\
	\end{itemize}
	\item [(4)]
	\begin{itemize}
		\item [i.] $\lambda_{j} f_{j}(\overline{x})=0$, for all $j\in\{1,2,3,\cdots,m\}$
		\item [ii.] $\mu_{i}(\overline{x}_{i}-u_{i})(\overline{x}_{i}-v_{i})= 0$, for all $i\in I$
		\item [iii.] $\nu_{i}(\overline{x}_{i}-u_{i})(\overline{x}_{i}-v_{i})= 0$, for all $i\in J$.\\
	\end{itemize}
\end{itemize}
Now $\nabla_{x}L^{\ast}(\overline{x},\lambda,\mu,\nu)=0$ splits into
$$\left[\nabla_{x}\left(\sum_{j=0}^{m}\lambda_{j}f_{j}(x)\right)+\nabla_{x}\left(\sum_{i\in I}\mu_{i}(x_{i}-u_{i})(x_{i}-v_{i})\right)+\nabla_{x}\left(\sum_{i\in J}\nu_{i}(x_{i}-u_{i})(x_{i}-v_{i})\right)\right]_{x=\overline{x}}=0.$$
That is, if $k\in I$, then $$\nabla_{x}\left(L(\overline{x},\lambda)\right)_{k}+\mu_{k}(\overline{x}_{k}-u_{k})+\mu_{k}(\overline{x}_{k}-v_{k})=0.$$
Multiplying both side of this equation by $(x_{k}-\overline{x}_{k})$, and using the Kuhn-Tucker Conditions (2), (3) and (4), we get, for each $k\in I$,
$$(\nabla_{x} L(\overline{x},\lambda))_{k}(x_{k}-\overline{x}_{k})\geq 0,$$
for all $x=(x_{i})\in\Delta$. Hence the conclusion follows.
\end{proof}
For all three programing problems considered in this note we shall assume that the linear independence constraint qualification is satisfied.
\section{The Quadratic Programming Problem}
The aim of this section is to derive necessary local optimality condition and sufficient global optimality condition to identifying global minimizers among the local minimizers of the following non-convex quadratic programming problem with mixed variables ($P_1$):
\begin{eqnarray*}
	\mbox{($P_1$)~~}~~\min_{x\in\mathbb{R}^{n}} f_{0}(x)&=&\min_{x\in\mathbb{R}^{n}} \frac{1}{2}x^{T}A_{0}x+a_{0}^{T}x+c_{0} \\
	\mbox{s.t.~~}~~ f_{j}(x) &=&\frac{1}{2}x^{T}A_{j}x+a_{j}^{T}x+c_{j}\leq 0,~~\forall~j\in\{1,2,3,\cdots,m\}\\
	&&x_{i} \in [u_{i},v_{i}], ~i\in I \\
	&&x_{i} \in \{u_{i},v_{i}\}, ~i\in J ;
\end{eqnarray*} 
where $I\cap J=\phi$, $I\cup J=\{1,2,3,\cdots,n\}$, $A_{j}=(a_{st}^{(j)})$ is an $n\times n$ symmetric matrix, for all $j\in\{0,1,2,3,\cdots,m\}$, $a_{j}=(a_{r}^{(j)})\in\mathbb{R}^{n}$, $c_{j}\in\mathbb{R}$ and $u_{i},v_{i}\in\mathbb{R}$ with $u_{i}<v_{i},$ for all $i\in \{1,2,3,\cdots,n\}$.\\
The Lagrangian of ($P_1$) becomes as 
\begin{equation}
\label{lag}L(x,\lambda)=\sum_{j=0}^{m}\lambda_{j}\left(\frac{1}{2}x^{T}A_{j}x+a_{j}^{T}x+c_{j}\right),
\end{equation}
where $\lambda_0=1$ and $\lambda_j\in\mathbb{R}^+$.
Model problem ($P_1$) differs from standard quadratic programming problem  (MP) because it allows a set of constraints which are of box constraints in continuous variables and in discrete variables. Model problem ($P_1$) appear in numerous application including electronic circuit design and computational chemistry and combinatorial optimization \cite{fp,mmr}. ($P_1$) covers for instance bivalent optimization problems where $u_{i}=0,~v_{i}=1$ for all $i\in J$ and $I=\emptyset$ and box constraints problems where $J=\emptyset$.
\subsection{Necessary Conditions for Local Minimizers}
Using the above Lemma (\ref{lem1}) analogous KKT necessary condition for a local minimizer of ($P_1$) is provided in the following theorem. This result will be used to identify global minimizers in the next subsection.
\begin{theorem}\label{TlcQP}
If $\overline{x}\in\tilde{D}$ is a local minimizer of ($P_1$), then  
\begin{equation}\label{lcl2QP}
\chi_{i}(\overline{x})\sum_{j=0}^{m}\lambda_{j}(A_{j}\overline{x}+a_{j})_{i}\leq0,~~\forall\,i\in I;
\end{equation}
where $\lambda_{j}\in\mathbb{R}^+; j=1,2,3,\cdots,m$ are the Lagrangian multipliers associated with $\overline{x}\in\tilde{D}$, $\lambda_0=1$, and $\chi_{i}(\overline{x})$ as in (\ref{eq4}).
\end{theorem}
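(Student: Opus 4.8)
The plan is to specialize the variational inequality (\ref{lcl1}) of Lemma~\ref{lem1} to single-coordinate perturbations and then read off the sign of each component of the gradient. For the quadratic Lagrangian (\ref{lag}) the gradient is explicitly $\nabla_{x}L(\overline{x},\lambda)=\sum_{j=0}^{m}\lambda_{j}(A_{j}\overline{x}+a_{j})$, so its $i$-th component equals the quantity $\sum_{j=0}^{m}\lambda_{j}(A_{j}\overline{x}+a_{j})_{i}$ appearing in (\ref{lcl2QP}); write $g_{i}$ for this number. Since $\Delta$ constrains only the continuous coordinates $i\in I$ and leaves the others free, I would take in (\ref{lcl1}) a point $x$ that agrees with $\overline{x}$ in every coordinate except one fixed $i\in I$, with $x_{i}$ ranging over $[u_{i},v_{i}]$. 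The inner product then collapses to a single term, yielding
$$g_{i}(x_{i}-\overline{x}_{i})\geq0,\qquad\forall\,x_{i}\in[u_{i},v_{i}].$$

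First I would dispose of the two boundary cases in the definition (\ref{eq4}) of $\chi_{i}(\overline{x})$. If $\overline{x}_{i}=u_{i}$, then $\chi_{i}(\overline{x})=-1$; taking $x_{i}=v_{i}>u_{i}$ makes $x_{i}-\overline{x}_{i}>0$, so $g_{i}\geq0$ and hence $\chi_{i}(\overline{x})g_{i}=-g_{i}\leq0$. Symmetrically, if $\overline{x}_{i}=v_{i}$, then $\chi_{i}(\overline{x})=1$; taking $x_{i}=u_{i}<v_{i}$ gives $x_{i}-\overline{x}_{i}<0$, forcing $g_{i}\leq0$, so that $\chi_{i}(\overline{x})g_{i}=g_{i}\leq0$.

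The remaining interior case is the one demanding a two-sided argument. If $\overline{x}_{i}\in(u_{i},v_{i})$, then $\chi_{i}(\overline{x})=g_{i}$, so the claim (\ref{lcl2QP}) reads $g_{i}^{2}\leq0$, i.e. $g_{i}=0$. Because $\overline{x}_{i}$ is strictly interior, the factor $x_{i}-\overline{x}_{i}$ in the displayed inequality can be made both positive and negative while keeping $x_{i}\in[u_{i},v_{i}]$; the inequality $g_{i}(x_{i}-\overline{x}_{i})\geq0$ holding for both signs forces $g_{i}=0$, whence $\chi_{i}(\overline{x})g_{i}=g_{i}^{2}=0\leq0$. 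Combining the three cases establishes (\ref{lcl2QP}) for every $i\in I$.

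I do not expect a genuine obstacle here: once (\ref{lcl1}) is available, the argument is essentially sign bookkeeping across the three regimes defining $\chi_{i}$. The only point worth stating carefully is the admissibility of the test points — perturbing a single continuous coordinate while freezing all coordinates indexed by $J$ (and the remaining $I$-coordinates) keeps $x$ inside $\Delta$ precisely because $\Delta$ imposes no restriction beyond the boxes $[u_{i},v_{i}]$, $i\in I$. This coordinate-separated structure is exactly what converts the global variational inequality of Lemma~\ref{lem1} into the desired componentwise sign conditions.
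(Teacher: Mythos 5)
Your proposal is correct and follows essentially the same route as the paper's proof: both specialize Lemma \ref{lem1} to a test point differing from $\overline{x}$ in a single coordinate $i\in I$, identify the $i$-th gradient component of the quadratic Lagrangian as $\sum_{j=0}^{m}\lambda_{j}(A_{j}\overline{x}+a_{j})_{i}$, and conclude by the same three-case sign analysis on $\chi_{i}(\overline{x})$ (one-sided tests at the endpoints, a two-sided test forcing the component to vanish in the interior case).
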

\begin{proof}
Suppose that, $\overline{x}\in\tilde{D}$ is a local minimizer of ($P_1$).\\ 
First we shall show that, for each $i\in I$,
\begin{equation}\label{eq1QP}
\sum_{j=0}^{m}\lambda_{j}(A_{j}\overline{x}+a_{j})_{i}(t-\overline{x}_{i})\geq0,~~\forall\,t\in[u_{i},v_{i}].
\end{equation}
For: let $i\in I$ and $t\in[u_{i},v_{i}]$ and define the vector $x=(x_{k})\in\mathbb{R}^{n}$ such that
$$x_{k}:=\left\{
\begin{array}{ll}
t & \mathrm{if}\ k=i \\
\overline{x}_{k} & \mathrm{if}\ k\neq i.\\
\end{array}%
\right. $$
Then clearly $x\in\Delta=\{x=(x_{i})\in\mathbb{R}^{n}~|~x_{i}\in[u_{i},v_{i}], ~i\in I\}$. Hence by the Lemma (\ref{lem1}), we have
$$\sum_{j=0}^{m}\lambda_{j}(A_{j}\overline{x}+a_{j})_{i}(t-\overline{x}_{i})\geq0,~~\forall\,t\in[u_{i},v_{i}].$$
Let $i\in I$ arbitrary. \\
\textbf{Case-1:} If $\overline{x}_{i}=u_{i}$, then choose $t=v_{i}$. From (\ref{eq1QP}), we have
\begin{center}
	$\displaystyle\chi_{i}(\overline{x})\sum_{j=0}^{m}\lambda_{j}(A_{j}\overline{x}+a_{j})_{i}\leq0$ as $\chi_{i}(\overline{x})=-1$ and $(v_i-\overline{x}_i)>0$.
\end{center}
\textbf{Case-2:} If $\overline{x}_{i}=v_{i}$, then choose $t=u_{i}$. From (\ref{eq1QP}), we have
\begin{center}
	$\displaystyle\chi_{i}(\overline{x})\sum_{j=0}^{m}\lambda_{j}(A_{j}\overline{x}+a_{j})_{i}\leq0$ as $\chi_{i}(\overline{x})=1$ and $(v_i-\overline{x}_i)<0$.
\end{center}
\textbf{Case-3:} If $\overline{x}_i\in (u_{i} ,v_{i} )$, then on the one hand, choose $t=u_{i}$. From (\ref{eq1QP}), we have
\begin{equation}\label{eq2QP}
\sum_{j=0}^{m}\lambda_{j}(A_{j}\overline{x}+a_{j})_{i}\leq0.
\end{equation}
On the other hand, choose $t=v_{i}$, the from (\ref{eq1QP}), we have
\begin{equation}\label{eq3QP}
\sum_{j=0}^{m}\lambda_{j}(A_{j}\overline{x}+a_{j})_{i}\geq0.
\end{equation}
From (\ref{eq2QP}) and (\ref{eq3QP}), we have $$\sum_{j=0}^{m}\lambda_{j}(A_{j}\overline{x}+a_{j})_{i}=0.$$
Thus $$\displaystyle\chi_{i}(\overline{x})\sum_{j=0}^{m}\lambda_{j}(A_{j}\overline{x}+a_{j})_{i}=\left(\sum_{j=0}^{m}\lambda_{j}(A_{j}\overline{x}+a_{j})_{i}\right)^{2}=0$$ as $\displaystyle\chi_{i}(\overline{x})=\sum_{j=0}^{m}\lambda_{j}(A_{j}\overline{x}+a_{j})_{i}=0.$ 
Hence by the summary of all above three cases, the optimality condition (\ref{lcl2QP}) follows.
\end{proof}
\subsection{Sufficient Global Optimality Conditions} 
In this section a useful sufficient global optimality condition is presented for a local minimizer to be a Global minimizer for ($P_1$). 
We establish such criteria by under-estimating the objective function by the Lagrangian of the problem ($P_1$). 

The following theorem enables us to have sufficient condition for the global minimizers. To establish this theorem, we show that the Lagrangian $L(x,\lambda)$ is a minimizing under-estimator of the objective function $f_{0}$ at the local minimizer $\overline{x}$ over the set $\tilde{D}$ under desired conditions.
\begin{theorem}\label{tscq}
Let $\overline{x}\in\tilde{D}$ be a local minimizer,  $\lambda_{j}\in\mathbb{R}^{+};~j\in \{1,2,\cdots,m\}$ be the Lagrangian multipliers associated with $\overline{x}\in\tilde{D}$ and $\lambda_{0}=1$. If
\begin{equation}\label{SCQP}
\sum_{j=0}^{m}\lambda_{j}\left[A_{j}-\mathrm{diag}\left(2\chi_{1}(\overline{x})\frac{(A_{j}\overline{x}+a_{j})_{1}}{(v_{1}-u_{1})},
\cdots,2\chi_{n}(\overline{x})\frac{(A_{j}\overline{x}+a_{j})_{n}}{(v_{n}-u_{n})}\right)\right]\succeq0,\vspace{-0.1cm}
\end{equation}
then $\overline{x}$ is a global minimizer of ($P_1$). Moreover, if 
\begin{equation}\label{SC1QP}
\sum_{j=0}^{m}\lambda_{j}\left[A_{j}-\mathrm{diag}\left(2\chi_{1}(\overline{x})\frac{(A_{j}\overline{x}+a_{j})_{1}}{(v_{1}-u_{1})},
\cdots,2\chi_{n}(\overline{x})\frac{(A_{j}\overline{x}+a_{j})_{n}}{(v_{n}-u_{n})}\right)\right]\succ0,\vspace{-0.1cm}
\end{equation}
then $\overline{x}$ is a unique global minimizer of ($P_1$).
\end{theorem}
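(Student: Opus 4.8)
The plan is to verify that the Lagrangian $\ell:=L(\cdot,\lambda)$ is itself a minimizing quadratic under-estimator of $f_0$ at $\overline{x}$ over $\tilde{D}$, which by definition immediately forces $f_0(x)\geq f_0(\overline{x})$ on $\tilde{D}$. First I would dispose of the under-estimation inequality $f_0(x)-f_0(\overline{x})\geq L(x,\lambda)-L(\overline{x},\lambda)$: since $\lambda_j\geq 0$ and $f_j(x)\leq 0$ for every $x\in\tilde{D}$, we have $L(x,\lambda)=f_0(x)+\sum_{j=1}^m\lambda_j f_j(x)\leq f_0(x)$, while the complementary slackness relations $\lambda_j f_j(\overline{x})=0$ give $L(\overline{x},\lambda)=f_0(\overline{x})$; subtracting yields the claim. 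It then remains to prove the second, harder inequality $L(x,\lambda)-L(\overline{x},\lambda)\geq 0$ on $\tilde{D}$, and this is exactly where the semidefinite hypothesis (\ref{SCQP}) enters.

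To this end I would set $A=\sum_{j=0}^m\lambda_j A_j$ and note $\nabla L(\overline{x},\lambda)=\sum_{j=0}^m\lambda_j(A_j\overline{x}+a_j)$. Because $L(\cdot,\lambda)$ is quadratic, the exact expansion
\begin{equation*}
L(x,\lambda)-L(\overline{x},\lambda)=\tfrac12(x-\overline{x})^TA(x-\overline{x})+\nabla L(\overline{x},\lambda)^T(x-\overline{x})
\end{equation*}
holds with no remainder. Writing $D=\mathrm{diag}(d_1,\dots,d_n)$ for the diagonal matrix of (\ref{SCQP}), where $d_i=2\chi_i(\overline{x})(\nabla L(\overline{x},\lambda))_i/(v_i-u_i)$, the hypothesis $A-D\succeq 0$ lets me bound the quadratic term below by its diagonal part, reducing the problem to showing that the separable sum
\begin{equation*}
\sum_{i=1}^n(\nabla L(\overline{x},\lambda))_i(x_i-\overline{x}_i)\left[1+\frac{\chi_i(\overline{x})(x_i-\overline{x}_i)}{v_i-u_i}\right]
\end{equation*}
is nonnegative for all $x\in\tilde{D}$.

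The principal obstacle is the coordinatewise sign analysis of this sum. For $i\in I$ I would split into the three cases $\overline{x}_i=u_i$, $\overline{x}_i=v_i$, and $\overline{x}_i\in(u_i,v_i)$: in the first two the bracket simplifies to $(v_i-x_i)/(v_i-u_i)$ respectively $(x_i-u_i)/(v_i-u_i)$, both nonnegative, while the necessary condition (\ref{lcl2QP}) of Theorem \ref{TlcQP}, namely $\chi_i(\overline{x})(\nabla L(\overline{x},\lambda))_i\leq 0$, fixes the sign of $(\nabla L(\overline{x},\lambda))_i$ so that the product is $\geq 0$; in the interior case Theorem \ref{TlcQP} gives $(\nabla L(\overline{x},\lambda))_i=0$ and the term vanishes. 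The delicate point is $i\in J$, where Theorem \ref{TlcQP} supplies no information, but here $x_i,\overline{x}_i\in\{u_i,v_i\}$ forces $x_i-\overline{x}_i\in\{0,\pm(v_i-u_i)\}$, which makes the bracket either multiplied by zero or itself equal to zero, so every discrete term vanishes identically. Summing the nonnegative contributions gives $L(x,\lambda)-L(\overline{x},\lambda)\geq 0$, whence $f_0(x)\geq f_0(\overline{x})$ on $\tilde{D}$ and $\overline{x}$ is a global minimizer. Finally, under the strict hypothesis (\ref{SC1QP}) the relation $A-D\succ 0$ upgrades the diagonal lower bound to a strict one whenever $x\neq\overline{x}$, yielding $L(x,\lambda)-L(\overline{x},\lambda)>0$ and therefore $f_0(x)>f_0(\overline{x})$ for every feasible $x\neq\overline{x}$, which is the asserted uniqueness.
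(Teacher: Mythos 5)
Your proposal is correct and follows essentially the same route as the paper: under-estimation of $f_0$ by the Lagrangian via complementary slackness, splitting the exact quadratic expansion into a part controlled by the semidefinite hypothesis and a separable remainder, and a coordinatewise sign analysis using the local necessary condition, with the $i\in J$ terms vanishing identically. Your aggregation of the sum over $j$ into a single matrix $A-D$ is only a cosmetic difference (and in fact states the use of hypothesis (\ref{SCQP}) slightly more cleanly than the paper, which momentarily asserts positive semidefiniteness of each summand rather than of the sum).
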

\begin{proof}
Let $\overline{x}\in\tilde{D}$ be a local minimizer and take, for each $i\in \{1,2,3,\cdots,n\}$ and $j\in \{0,1,2,\cdots,m\}$,
$$q_{i}^{(j)}=2\chi_{i}(\overline{x})\frac{(A_{j}\overline{x}+a_{j})_{i}}{(v_{i}-u_{i})}.$$
Now from the Kuhn-Tucker conditions for ($P_1$), we have there exists $\lambda_{j}\in\mathbb{R}^{+};~j\in \{0,1,2,\cdots,m\}$ with $\lambda_{0}=1$ such that
$f_{0}(\overline{x})= L(\overline{x},\lambda)$ as $\lambda_jf_j(\overline{x})=$, for all $j=1,2,3,\cdots,m$ and $$f_{0}(x)\geq f_0(x)+\sum_{j=1}^{m}\lambda_jf_j(x)= L(x,\lambda),~~\forall\,x \in\tilde{D}$$
as $\lambda_jf_j(x)\leq0$ for all $x \in\tilde{D}$ and $j=1,2,3,\cdots,m.$
So we have
\begin{equation}\label{eq1QP2}
f_{0}(x)-f_{0}(\overline{x})\geq L(x,\lambda)-L(\overline{x},\lambda),~~\forall\,x \in\tilde{D}.
\end{equation}
Now for any $x \in\tilde{D}$,
\begin{eqnarray*}
L(x,\lambda)-L(\overline{x},\lambda)&=&\sum_{j=0}^{m}\lambda_{j}\left[\left(\frac{1}{2}x^{T}A_{j}x+a_{j}^{T}x+c_{j}\right)-
												\left(\frac{1}{2}\overline{x}^{T}A_{j}\overline{x}+a_{j}^{T}\overline{x}+c_{j}\right)\right]\\
&=&\sum_{j=0}^{m}\frac{1}{2}\lambda_{j}\left[(x-\overline{x})^{T}A_{j}(x-\overline{x})+2(A_{j}\overline{x}+a_{j})^{T}(x-\overline{x})\right]\\
&=&\sum_{j=0}^{m}\frac{1}{2}\lambda_{j}(x-\overline{x})^{T}[A_{j}-
\mathrm{diag}(q_{1}^{(j)},q_{2}^{(j)},\cdots,q_{n}^{(j)})](x-\overline{x})\\
&&+\sum_{j=0}^{m}\frac{1}{2}\lambda_{j}(x-\overline{x})^{T}\mathrm{diag}
(q_{1}^{(j)},q_{2}^{(j)},\cdots,q_{n}^{(j)})(x-\overline{x})\\
&&+\sum_{j=0}^{m}\lambda_{j}(A_{j}\overline{x}+a_{j})^{T}(x-\overline{x}).
\end{eqnarray*}
Thus 
\begin{eqnarray*}
	L(x,\lambda)-L(\overline{x},\lambda)&=&\sum_{j=0}^{m}\frac{1}{2}\lambda_{j}(x-\overline{x})^{T}[A_{j}-
	\mathrm{diag}(q_{1}^{(j)},q_{2}^{(j)},\cdots,q_{n}^{(j)})](x-\overline{x})\\
	&&+\sum_{j=0}^{m}\lambda_{j}\left[\sum_{i=1}^{n}\frac{1}{2}q_{i}^{(j)}(x_{i}-\overline{x}_{i})^{2}+(A_{j}\overline{x}+a_{j})_{i}(x_{i}-\overline{x}_{i})\right].
\end{eqnarray*}
From the condition (\ref{SCQP}), we have $[A_{j}-\mathrm{diag}(q_{1}^{(j)},q_{2}^{(j)},\cdots,q_{n}^{(j)})]\succeq0$.
Thus, for all $x \in\tilde{D}$, $$\sum_{j=0}^{m}\frac{1}{2}\lambda_{j}(x-\overline{x})^{T}[A_{j}-
\mathrm{diag}(q_{1}^{(j)},q_{2}^{(j)},\cdots,q_{n}^{(j)})](x-\overline{x})\geq0.$$
Therefore $$L(x,\lambda)-L(\overline{x},\lambda)\geq\sum_{j=0}^{m}\lambda_{j}\left[\sum_{i=1}^{n}\frac{1}{2}q_{i}^{(j)}(x_{i}-\overline{x}_{i})^{2}+(A_{j}\overline{x}+a_{j})_{i}(x_{i}-\overline{x}_{i})\right],$$ for all $x \in\tilde{D}$. Now using the optimality condition (\ref{lcl2QP}), we show that  
$$\sum_{j=0}^{m}\lambda_{j}\left[\sum_{i=1}^{n}\frac{1}{2}q_{i}^{(j)}(x_{i}-\overline{x}_{i})^{2}+(A_{j}\overline{x}+a_{j})_{i}(x_{i}-\overline{x}_{i})\right]\geq0,$$ for all $x \in\tilde{D}$. For let $x=(x_{i})\in\tilde{D}$ arbitrarily.\\
\textbf{Case-1:} If $\overline{x}_{i}=u_{i}$, then (\ref{lcl2QP}) gives $\displaystyle\sum_{j=0}^{m}\lambda_{j}(A_{j}\overline{x}+a_{j})_{i}\geq0$.
Now 
\begin{eqnarray*}
~&&\sum_{j=0}^{m}\lambda_{j}\left[\frac{1}{2}q_{i}^{(j)}(x_{i}-\overline{x}_{i})^{2}+(A_{j}\overline{x}+a_{j})_{i}(x_{i}-\overline{x}_{i})\right]\\
&=&\sum_{j=0}^{m}\lambda_{j}\left[(-1)\frac{(A_{j}\overline{x}+a_{j})_{i}}{(v_{i}-u_{i})}(x_{i}-\overline{x}_{i})^{2}+(A_{j}\overline{x}+a_{j})_{i}(x_{i}-\overline{x}_{i})\right]\\
&=&(x_{i}-u_{i})\left[\sum_{j=0}^{m}\lambda_{j}\frac{(A_{j}\overline{x}+a_{j})_{i}}{(v_{i}-u_{i})}\right]\left(1-\frac{(x_{i}-u_{i})}{(v_{i}-u_{i})}\right).
\end{eqnarray*}
Therefore 
$$\sum_{j=0}^{m}\lambda_{j}\left[\frac{1}{2}q_{i}^{(j)}(x_{i}-\overline{x}_{i})^{2}+(A_{j}\overline{x}+a_{j})_{i}(x_{i}-\overline{x}_{i})\right]$$
is non-negative, if $i\in I$ and is equal to $0$, if $i\in J$.\\
\textbf{Case-2:} If $\overline{x}_{i}=v_{i}$, then (\ref{lcl2QP}) gives  $\displaystyle\sum_{j=0}^{m}\lambda_{j}(A_{j}\overline{x}+a_{j})_{i}\leq0$ and in a similar method of Case-1, we will have
$$\sum_{j=0}^{m}\lambda_{j}\left[\frac{1}{2}q_{i}^{(j)}(x_{i}-\overline{x}_{i})^{2}+(A_{j}\overline{x}+a_{j})_{i}(x_{i}-\overline{x}_{i})\right]$$
is non-negative, if $i\in I$ and is equal to $0$, if $i\in J$.\\ 
\textbf{Case-3:} If $\overline{x}_{i}\in(u_{i},v_{i})$ and $i\in I$, then $\displaystyle\sum_{j=0}^{m}\lambda_{j}(A_{j}\overline{x}+a_{j})_{i}=0$
 and so $\displaystyle\sum_{j=0}^{m}\lambda_{j}q_{i}^{(j)}=0$. Thus
$$\sum_{j=0}^{m}\lambda_{j}\left[\frac{1}{2}q_{i}^{(j)}(x_{i}-\overline{x}_{i})^{2}+(A_{j}\overline{x}+a_{j})_{i}(x_{i}-\overline{x}_{i})\right]=0.$$
The summary of the above all three cases gives us that
$$\sum_{j=0}^{m}\lambda_{j}\left[\sum_{i=1}^{n}\frac{1}{2}q_{i}^{(j)}(x_{i}-\overline{x}_{i})^{2}+(A_{j}\overline{x}+a_{j})_{i}(x_{i}-\overline{x}_{i})\right]\geq0.$$
Thereby $$f_{0}(x)-f_{0}(\overline{x})\geq L(x,\lambda)-L(\overline{x},\lambda)\geq0,$$
for all $x \in\tilde{D}$ and so $L(x,\lambda)$ is a minimizing under-estimator of $f_{0}$. Hence $\overline{x}\in\tilde{D}$ is a Global minimizer of ($P_1$). If the condition (\ref{SC1QP}) holds, then we can show that 
$$f_{0}(x)-f_{0}(\overline{x})\geq L(x,\lambda)-L(\overline{x},\lambda)>0,$$
for all $x \in\tilde{D}$. Therefore the uniqueness immediately follows.
\end{proof}
Now we shall illustrate the sufficiency criteria given by above Theorem.
\begin{example}
Consider the following quadratic non-convex minimization problem ($E_1$):
\begin{eqnarray*}
	\mbox{($E_1$)~~}~~\min_{(x_1,x_2)^\intercal\in\mathbb{R}^{2}} f\begin{pmatrix}
		x_1 \\ x_2
	\end{pmatrix}&=&\min_{(x_1,x_2)^\intercal\in\mathbb{R}^{2}} x_1^2+x_2^2-4x_1x_2-8x_1-8x_2+32 \\
	\mbox{s.t.~~}~~ g\begin{pmatrix}
		x_1 \\ x_2
	\end{pmatrix} &=&x_1^2+x_1x_2-x_1+3x_2-12\leq 0,\\
	&&x_1\in [-2,2],\\
	&&x_2\in \{-2,2\}.
\end{eqnarray*}
We can write $f\begin{pmatrix}
x_1 \\ x_2
\end{pmatrix}$ and $g\begin{pmatrix}
x_1 \\ x_2
\end{pmatrix}$ in the form of
$$f(\mathbf{x})=\dfrac{1}{2}\mathbf{x}^\intercal A_0\mathbf{x}+\mathbf{a}_0^\intercal\mathbf{x}+c_0\mbox{~~and~~}g(\mathbf{x})=\dfrac{1}{2}\mathbf{x}^\intercal A_1\mathbf{x}+\mathbf{a}_1^\intercal\mathbf{x}+c_1$$  respectively, where $\mathbf{x}=\begin{pmatrix}
x_1 \\ x_2
\end{pmatrix}$, $A_0=\begin{bmatrix}
~\,\,2 & -4\\ -4 & ~\,\,2
\end{bmatrix}$, $A_1=\begin{bmatrix}
2 & 1\\ 1 & 0
\end{bmatrix}$, $\mathbf{a}_0=\begin{pmatrix}
-8 \\ -8
\end{pmatrix}$, $\mathbf{a}_1=\begin{pmatrix}
-1 \\ ~\,\,3
\end{pmatrix}$, $c_0=32$ and $c_1=-12$. Let $u_1=u_2=-2$ and $v_1=v_2=2$.
The Lagrangian of ($E_1$) is 
\begin{equation}\label{lagE1}
L(\mathbf{x},\lambda,\mu,\nu)=x_1^2+x_2^2-4x_1x_2-8x_1-8x_2+32+\lambda(x_1^2+x_1x_2-x_1+3x_2-12)+\mu(x_1^2-4)+\nu(x_2^2-4).
\end{equation}
Applying the KKT conditions for ($E_1$) we get
\begin{itemize}
	\item[(1)] $\dfrac{\partial L(\mathbf{x},\lambda,\mu,\nu)}{\partial x_1}=(2x_1-4x_2-8)+\lambda(2x_1+x_2-1)+2\mu x_1=0$,
	\item[(2)] $\lambda,\mu\geq0$, $x_1^2+x_1x_2-x_1+3x_2-12\leq 0$, $(x_1^2-4)\leq0$,
	\item[(3)] $\lambda(x_1^2+x_1x_2-x_1+3x_2-12)=0$,
	\item[(4)] $\mu(x_1^2-4)=0$,
	\item[(5)] $\nu=0$ and $x_2=-2$ or $x_2=2$.
\end{itemize}
\textbf{Case 1:} $\lambda=0$ and $\mu=0$. In this case either $x_1=4$ or $x_1=10$, but this contradicts with the condition $(x_1^2-4)\leq0$, so there is no solution to KKT in this case.\\\\
\textbf{Case 2:} $\lambda=0$ and $x_1^2-4=0$. In this case for both values $x_1=\pm2$, $\mu<0$. This violates the rule $\mu\geq0$ and there is no solution to KKT in this case.\\\\
\textbf{Case 3:} $(x_1^2+x_1x_2-x_1+3x_2-12)=0$ and $\mu=0$. In this case, if $x_2=2$, $x_1\in\{-3,2\}$ and if $x_2=-2$, $x_1\in\{-3,6\}$. Therefore the possible value for the pair $(x_1, x_2)$ is $(x_1,x_2)=(2,2)$. The solution, in this case, is $(x_1,x_2,\lambda,\mu,\nu)=(2,2,12/5,0,0)$.\\\\
\textbf{Case 4:} $(x_1^2+x_1x_2-x_1+3x_2-12)=0$ and $x_1^2-4=0$. In this case, these two equations are satisfied by only one pair $(x_1,x_2)=(2,2)$ and it repeats the solution of case 3.
 
Therefore, the problem ($E_1$) has a unique local minimizer $\overline{\mathbf{x}}=(2,2)^\intercal$ and the Lagrange multiplier $\lambda=\dfrac{12}{5}$. Notice that the linear independence constraint qualification is satisfied in this problem ($E_1$). Further, direct calculation shows that 
the global optimality condition (\ref{SC1QP}) is satisfied, and therefore $\overline{\mathbf{x}}=(2,2)^\intercal$ is the unique global minimizer.
\end{example}
Let us see  another example, which illustrates the significance of the optimality conditions.
\begin{example}
Consider the following quadratic non-convex minimization problem ($E_2$):
\begin{eqnarray*}
	\mbox{($E_2$)~~}~~\min_{(x_1,x_2)^\intercal\in\mathbb{R}^{2}} f\begin{pmatrix}
		x_1 \\ x_2
	\end{pmatrix}&=&\min_{(x_1,x_2)^\intercal\in\mathbb{R}^{2}} -x_1^2-x_2^2-x_1x_2+x_1\\
	\mbox{s.t.~~}~~ g\begin{pmatrix}
		x_1 \\ x_2
	\end{pmatrix} &=&x_1^2+x_2^2-2\leq 0,\\
	&&x_1\in [-1,1],\\
	&&x_2\in \{-1,1\}.
\end{eqnarray*}
We can write $f\begin{pmatrix}
x_1 \\ x_2
\end{pmatrix}$ and $g\begin{pmatrix}
x_1 \\ x_2
\end{pmatrix}$ in the form of
$$f(\mathbf{x})=\dfrac{1}{2}\mathbf{x}^\intercal A_0\mathbf{x}+\mathbf{a}_0^\intercal\mathbf{x}+c_0\mbox{~~and~~}g(\mathbf{x})=\dfrac{1}{2}\mathbf{x}^\intercal A_1\mathbf{x}+\mathbf{a}_1^\intercal\mathbf{x}+c_1$$  respectively, where $\mathbf{x}=\begin{pmatrix}
x_1 \\ x_2
\end{pmatrix}$, $A_0=\begin{bmatrix}
-2 & -1\\ -1 & -2
\end{bmatrix}$, $A_1=\begin{bmatrix}
2 & 0\\ 0 & 2
\end{bmatrix}$, $\mathbf{a}_0=\begin{pmatrix}
1 \\ 0
\end{pmatrix}$, $\mathbf{a}_1=\begin{pmatrix}
0 \\ 0
\end{pmatrix}$, $c_0=0$ and $c_1=-2$. Let $u_1=u_2=-1$ and $v_1=v_2=1$.
The Lagrangian of ($E_2$) is 
\begin{equation}\label{lagE2}
L(\mathbf{x},\lambda,\mu,\nu)=-x_1^2-x_2^2-x_1x_2+x_1+\lambda(x_1^2+x_2^2-2)+\mu(x_1^2-1)+\nu(x_2^2-1).
\end{equation}
Apply the KKT conditions for ($E_2$). Then we have
\begin{itemize}
	\item[(1)] $\dfrac{\partial L(\mathbf{x},\lambda,\mu,\nu)}{\partial x_1}=(-2x_1-x_2+1)+2\lambda x_1+2\mu x_1=0$,
	\item[(2)] $\lambda,\mu\geq0$, $x_1^2+x_2^2-2\leq 0$, $(x_1^2-4)\leq0$,
	\item[(3)] $\lambda(x_1^2+x_2^2-2)=0$,
	\item[(4)] $\mu(x_1^2-1)=0$,
	\item[(5)] $\nu=0$ and $x_2=-1$ or $x_2=1$.
\end{itemize}
\textbf{Case 1:} $\lambda\ne0$ or $\mu\ne0$. In this case, there are two sets of solutions $\{(x_1,x_2,\lambda,\mu,0)~\mid~x_1=\pm1, x_2=1,\lambda+\mu=1~~\mbox{and}~~\lambda,\mu\geq0\}$ and $\{(-1,-1,\lambda,\mu,0)~\mid~\lambda+\mu=2~~\mbox{and}~~\lambda,\mu\geq0\}$ to KKT, for the problem ($E_2$).\\\\
\textbf{Case 2:} $\lambda=0$ and $\mu=0$. In this case, there are two solutions $(x_1,x_2,\lambda,\mu,0)=(1,-1,0,0,0)$ and $(0,1,0,0,0)$.

Let $\overline{\mathbf{x}}_1=(1,1)^\intercal,\overline{\mathbf{x}}_2=(-1,1)^\intercal,\overline{\mathbf{x}}_3=(-1,-1)^\intercal,\overline{\mathbf{x}}_4=(1,-1)^\intercal$ and $\overline{\mathbf{x}}_5=(0,1)^\intercal$. Direct calculations show that $\overline{\mathbf{x}}_3=(-1,-1)^\intercal$ satisfies the condition (\ref{SC1QP}). Other local minimizers do not satisfy any of the two conditions (\ref{SCQP}) or (\ref{SC1QP}) for all $\lambda\in[0,2]$. Therefore, the point $\overline{\mathbf{x}}_3=(-1,-1)^\intercal$ is the unique global minimizer.
\end{example}
\section{$\rho$-Convex Programing Problem}
Development of global optimality conditions to identify global minimizers of various classes of non-convex problems has been the recent trend in non-linear optimization. In similar manner in this section we intend to distinguish local and global  minimizers for the following $\rho-$convex model problem.
\begin{eqnarray*}
	\mbox{($P_2$)~~}~~\min_{x\in\mathbb{R}^{n}} g_{0}(x)&=&\min_{x\in\mathbb{R}^{n}} f_{0}(x)-\frac{1}{2}x^{T}A_{0}x \\
	\mbox{subject to~~}~~ g_{j}(x) &=&f_{j}(x)-\frac{1}{2}x^{T}A_{j}x\leq 0,~~\forall~j\in\{1,2,3,\cdots,m\}\\
	&&x_{i} \in [u_{i},v_{i}], ~i\in I \\
	&&x_{i} \in \{u_{i},v_{i}\}, ~i\in J ;
\end{eqnarray*}
where $I\cap J=\phi$, $I\cup J=\{1,2,3,\cdots,n\}$, for each $j\in\{0,1,2,3,\cdots,m\}$, the function $f_j:\R^n \rightarrow \R$ is convex and  $A_{j}=(a_{st}^{(j)})$ is an $n\times n$ symmetric matrix,  
and $u_{i},v_{i}\in\mathbb{R}$ with $u_{i}<v_{i},$ for all $i\in \{1,2,3,\cdots,n\}$.\\
Recent research displays the development of conditions necessary or sufficient for characterizing global minimizers of smooth functions with bounded mixed variables (see \cite{jrwbox} and other references therein). However, a drawback of this development is that the conditions were neither based on local optimality conditions nor expressed in terms of local minimizers. A verifiable sufficient global optimality condition is established in this paper by refining the method of proof developed in \cite{jls}, and by incorporating the local optimality conditions. Such developed condition is a useful criterion for distinguishing global minimizers among the local minimizers. 
\subsection{Local necessary optimality condition for ($P_2$)}
Using the Lemma (\ref{lem1}), analogous KKT necessary condition for a local minimizer of ($P_2$) is obtained in the following theorem. This result will be used to identify global minimizers in the next section.
\begin{theorem}\label{TlcRC}
	If $\overline{x}\in\tilde{D}$ is a local minimizer of ($P_2$), then  
	\begin{equation}\label{lcl2RC}
	\chi_{i}(\overline{x})\sum_{j=0}^{m}\lambda_{j}(\nabla f_{j}(\overline{x})-A_{j}\overline{x})_{i}\leq0,~~\forall\,i\in I;
	\end{equation}
	where $\lambda_{j}\in\mathbb{R}^+; j=1,2,3,\cdots,m$ are the Lagrangian multipliers associated with $\overline{x}\in\tilde{D}$ and $\lambda_0=1$.
\end{theorem}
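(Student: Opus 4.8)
The plan is to follow the proof of Theorem~\ref{TlcQP} almost verbatim, since ($P_2$) is precisely the special case of (MP) in which the objective and constraint functions are the $\rho$-convex functions $g_j(x)=f_j(x)-\frac{1}{2}x^{T}A_jx$. With $\lambda_0=1$, the associated Lagrangian is $L(x,\lambda)=\sum_{j=0}^{m}\lambda_j g_j(x)$, so that
$$\nabla_x L(\overline{x},\lambda)_i=\sum_{j=0}^{m}\lambda_j(\nabla f_j(\overline{x})-A_j\overline{x})_i.$$
Thus the quantity $\chi_i(\overline{x})\sum_{j=0}^{m}\lambda_j(\nabla f_j(\overline{x})-A_j\overline{x})_i$ appearing in (\ref{lcl2RC}) is exactly $\chi_i(\overline{x})\,\nabla_x L(\overline{x},\lambda)_i$, and the claim is the $\rho$-convex analogue of (\ref{lcl2QP}).

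First I would invoke Lemma~\ref{lem1}: since $\overline{x}\in\tilde{D}$ is a local minimizer of ($P_2$), the variational inequality $\nabla_x L(\overline{x},\lambda)(x-\overline{x})\geq 0$ holds for all $x\in\Delta$. To extract coordinatewise information, I would fix an arbitrary $i\in I$ and a scalar $t\in[u_i,v_i]$, and form the feasible test point $x\in\Delta$ that agrees with $\overline{x}$ in every coordinate except the $i$-th, where it takes the value $t$. Feeding this $x$ into Lemma~\ref{lem1} collapses the sum over coordinates to a single surviving term and yields
$$\sum_{j=0}^{m}\lambda_j(\nabla f_j(\overline{x})-A_j\overline{x})_i\,(t-\overline{x}_i)\geq 0,\quad\forall\,t\in[u_i,v_i],$$
which is the $\rho$-convex counterpart of (\ref{eq1QP}).

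Next I would split into the three cases governing the definition (\ref{eq4}) of $\chi_i(\overline{x})$. If $\overline{x}_i=u_i$, then choosing $t=v_i$ makes $t-\overline{x}_i>0$ and forces $\sum_{j=0}^{m}\lambda_j(\nabla f_j(\overline{x})-A_j\overline{x})_i\geq 0$; since $\chi_i(\overline{x})=-1$ here, the product is $\leq 0$. Symmetrically, if $\overline{x}_i=v_i$, choosing $t=u_i$ forces the sum to be $\leq 0$ while $\chi_i(\overline{x})=1$, again giving a non-positive product. Finally, if $\overline{x}_i\in(u_i,v_i)$, then testing with both $t=u_i$ and $t=v_i$ pins the sum to exactly $0$, so the product vanishes. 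Collecting the three cases yields (\ref{lcl2RC}).

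I do not expect a genuine obstacle here; the only point worth verifying is that the hypotheses of Lemma~\ref{lem1} transfer. That lemma was established for (MP) with a generic differentiable Lagrangian, and ($P_2$) meets those requirements because each $g_j$ is differentiable (each $f_j$ being convex and assumed smooth enough for $\nabla f_j(\overline{x})$ to exist, consistent with the application of KKT), while the linear independence constraint qualification is assumed throughout. The sole piece of bookkeeping that differs from Theorem~\ref{TlcQP} is that the quadratic part here contributes $-A_j\overline{x}$ to the gradient rather than the $+A_j\overline{x}+a_j$ of the quadratic problem; once this is recorded, the case analysis is identical.
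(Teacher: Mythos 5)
Your proposal is correct and matches the paper's own proof essentially step for step: both invoke Lemma~\ref{lem1}, test it with the vector agreeing with $\overline{x}$ in all coordinates except the $i$-th, and then run the same three-case analysis on $\chi_i(\overline{x})$. The only addition you make — explicitly recording that $\nabla_x L(\overline{x},\lambda)_i=\sum_{j=0}^m\lambda_j(\nabla f_j(\overline{x})-A_j\overline{x})_i$ — is a harmless clarification of what the paper leaves implicit.
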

\begin{proof}
	Suppose that, $\overline{x}\in\tilde{D}$ is a local minimizer of ($P_2$).\\ 
	First we shall show that, for each $i\in I$,
	\begin{equation}\label{eq1RC}
	\sum_{j=0}^{m}\lambda_{j}(\nabla f_{j}(\overline{x})-A_{j}\overline{x})_{i}(t-\overline{x}_{i})\geq0,~~\forall\,t\in[u_{i},v_{i}].
	\end{equation}
	For: let $i\in I$ and $t\in[u_{i},v_{i}]$ and define the vector $x=(x_{k})\in\mathbb{R}^{n}$ such that
	$$x_{k}:=\left\{
	\begin{array}{ll}
	t & \mathrm{if}\ k=i \\
	\overline{x}_{k} & \mathrm{if}\ k\neq i.\\
	\end{array}%
	\right. $$
	Then clearly $x\in\Delta=\{x=(x_{i})\in\mathbb{R}^{n}~|~x_{i}\in[u_{i},v_{i}], ~i\in I\}$. Hence by the Lemma (\ref{lem1}), we have
	$$\sum_{j=0}^{m}\lambda_{j}(\nabla f_{j}(\overline{x})-A_{j}\overline{x})_{i}(t-\overline{x}_{i})\geq0,~~\forall\,t\in[u_{i},v_{i}].$$
	Let $i\in I$ arbitrary. \\
	\textbf{Case-1:} If $\overline{x}_{i}=u_{i}$, then choose $t=v_{i}$. From (\ref{eq1RC}), we have
	$$\chi_{i}(\overline{x})\sum_{j=0}^{m}\lambda_{j}(\nabla f_{j}(\overline{x})-A_{j}\overline{x})_{i}\leq0.$$
	\textbf{Case-2:} If $\overline{x}_{i}=v_{i}$, then choose $t=u_{i}$. From (\ref{eq1RC}), we have
	$$\chi_{i}(\overline{x})\sum_{j=0}^{m}\lambda_{j}(\nabla f_{j}(\overline{x})-A_{j}\overline{x})_{i}\leq0.$$
	\textbf{Case-3:} If $\overline{x}_i\in (u_{i} ,v_{i} )$, then on the one hand, choose $t=u_{i}$. From (\ref{eq1RC}), we have
	\begin{equation}\label{eq2RC}
	\sum_{j=0}^{m}\lambda_{j}(\nabla f_{j}(\overline{x})-A_{j}\overline{x})_{i}\leq0.
	\end{equation}
	On the other hand, choose $t=v_{i}$, the from (\ref{eq1RC}), we have
	\begin{equation}\label{eq3RC}
	\sum_{j=0}^{m}\lambda_{j}(\nabla f_{j}(\overline{x})-A_{j}\overline{x})_{i}\geq0.
	\end{equation}
	From (\ref{eq2RC}) and (\ref{eq3RC}), we have $$\chi_{i}(\overline{x})=\sum_{j=0}^{m}\lambda_{j}(\nabla f_{j}(\overline{x})-A_{j}\overline{x})_{i}=0.$$
	Thus\vspace{-0.3cm} $$\chi_{i}(\overline{x})\sum_{j=0}^{m}\lambda_{j}(\nabla f_{j}(\overline{x})-A_{j}\overline{x})_{i}=\left(\sum_{j=0}^{m}\lambda_{j}(\nabla f_{j}(\overline{x})-A_{j}\overline{x})_{i}\right)^{2}=0.$$
	Hence by the summary of all above three cases, the optimality condition (\ref{lcl2RC}) follows.
\end{proof}
\begin{remark}
	The Lemma (\ref{lem1}) gives us, for given local minimizer $\overline{x}\in\tilde{D}$ the set 
	$$\left\{\frac{(\nabla L(\overline{x},\lambda))^{T}(x-\overline{x})}{\|x-\overline{x}\|^{2}}~\mid ~x \in \tilde{D} ~\mbox{~with~}~x\neq\overline{x}\right\}$$ is bounded below by 0.
	Hence, $$\inf\left\{\frac{(\nabla L(\overline{x},\lambda))^{T}(x-\overline{x})}{\|x-\overline{x}\|^{2}}~\mid ~x \in \tilde{D} ~\mbox{~with~}~x\neq\overline{x}\right\}~\mbox{~exists.~}~$$
\end{remark}
\subsection{A Sufficiency Global Optimality Condition  }
The following theorem provides sufficient condition for the global minimizers. But unfortunately this condition might not be readily verifiable as it involved with another minimization problem.
\begin{theorem}\label{tsc}
	Let $\overline{x}\in\tilde{D}$ be a local minimizer of ($P_2$) and $\lambda_{0}=1$. Suppose that $\lambda_{j}\in\mathbb{R}^{+};~j\in \{1,2,\cdots,m\}$ are the Lagrangian multipliers associated with $\overline{x}\in\tilde{D}$ as given in the Lemma (\ref{lem1}) and $\mu_{j}\in\R$ are the first eigenvalues of the symmetric matrices $A_{j}$, for all $j\in\{0,1,2,\cdots,m\}$. If 
	\begin{equation}\label{SC}
	\sum_{j=0}^m\lambda_j\mu_j\geq-\inf_{x\neq\overline{x}}\frac{(\nabla L(\overline{x},\lambda))^{T}(x-\overline{x})}{\|x-\overline{x}\|^{2}},
	\end{equation}
	then $\overline{x}$ is a global minimizer of ($P_2$).
\end{theorem}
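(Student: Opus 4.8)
The plan is to reproduce the under-estimation strategy of Theorem \ref{tscq}: I will show that the Lagrangian $L(\cdot,\lambda)=\sum_{j=0}^{m}\lambda_j g_j$ is a minimizing under-estimator of the objective $g_0$ at $\overline{x}$ over $\tilde{D}$, and then convert the global inequality $L(x,\lambda)\ge L(\overline{x},\lambda)$ into the single scalar condition (\ref{SC}) by exploiting the $\rho$-convexity of $L$.

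First I would establish, exactly as in the quadratic case, that
\[
g_0(x)-g_0(\overline{x})\ge L(x,\lambda)-L(\overline{x},\lambda),\qquad\forall\,x\in\tilde{D}.
\]
This uses only the KKT data attached to $\overline{x}$ (Lemma \ref{lem1}): since $\lambda_j\ge0$ and $g_j(x)\le0$ on $\tilde{D}$ for $j\ge1$, we have $g_0(x)\ge g_0(x)+\sum_{j\ge1}\lambda_j g_j(x)=L(x,\lambda)$, while complementary slackness $\lambda_j g_j(\overline{x})=0$ together with $\lambda_0=1$ gives $L(\overline{x},\lambda)=g_0(\overline{x})$. Hence it suffices to prove $L(x,\lambda)\ge L(\overline{x},\lambda)$ for every feasible $x$.

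Next I would identify the curvature of the Lagrangian. By Proposition \ref{p2} each $g_j$ is $\mu_j$-convex, i.e.\ $g_j-\mu_j\|\cdot\|^2$ is convex; multiplying by $\lambda_j\ge0$ and summing shows that $L(\cdot,\lambda)-(\sum_{j=0}^m\lambda_j\mu_j)\|\cdot\|^2$ is a nonnegative combination of convex functions and therefore convex. Thus $L(\cdot,\lambda)$ is $\rho$-convex on $\R^n$ with $\rho=\sum_{j=0}^m\lambda_j\mu_j$. Applying the first-order $\rho$-convex lower bound of Proposition \ref{p3} to $h=L(\cdot,\lambda)$ at the pair $(x,\overline{x})$ yields
\[
L(x,\lambda)-L(\overline{x},\lambda)\ge\Big(\sum_{j=0}^m\lambda_j\mu_j\Big)\|x-\overline{x}\|^2+(\nabla L(\overline{x},\lambda))^{T}(x-\overline{x}),\qquad\forall\,x\in\tilde{D}.
\]

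Finally I would close the argument by dividing through by $\|x-\overline{x}\|^2>0$ for $x\ne\overline{x}$: the right-hand side is nonnegative for every feasible $x\ne\overline{x}$ precisely when
\[
\sum_{j=0}^m\lambda_j\mu_j\ge-\frac{(\nabla L(\overline{x},\lambda))^{T}(x-\overline{x})}{\|x-\overline{x}\|^2},
\]
and taking the infimum over $x\in\tilde{D}\setminus\{\overline{x}\}$ (which exists by the Remark following Theorem \ref{TlcRC}, where the local necessary condition guarantees the ratio is bounded below) turns this into hypothesis (\ref{SC}). Consequently $L(x,\lambda)\ge L(\overline{x},\lambda)$ on $\tilde{D}$, and combined with the under-estimation inequality, $g_0(x)\ge g_0(\overline{x})$ for all $x\in\tilde{D}$, i.e.\ $\overline{x}$ is a global minimizer. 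The only genuine subtlety is the passage from the pointwise requirement to the infimum form of (\ref{SC}); the quadratic-growth term $\rho\|x-\overline{x}\|^2$ supplied by $\rho$-convexity is exactly what makes the scalar bound (\ref{SC}) sufficient for the under-estimator to dominate, so no optimization over $\tilde{D}$ beyond the single infimum defining (\ref{SC}) is required.
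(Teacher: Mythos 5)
Your proposal is correct and follows essentially the same route as the paper's own proof: the under-estimation inequality $g_0(x)-g_0(\overline{x})\ge L(x,\lambda)-L(\overline{x},\lambda)$ from the KKT data, the identification of $L(\cdot,\lambda)$ as $\rho$-convex with $\rho=\sum_{j}\lambda_j\mu_j$ via Proposition \ref{p2}, the first-order bound of Proposition \ref{p3}, and the division by $\|x-\overline{x}\|^2$ to reach condition (\ref{SC}). Your explicit justification that $L-\rho\|\cdot\|^2$ is a nonnegative combination of convex functions is in fact slightly more careful than the paper's one-line assertion of the same fact.
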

\begin{proof}
	Let $\overline{x} \in \tilde{D}$ be a local minimizer of ($P_2$). Now for any $x \in \tilde{D}$,
	\begin{eqnarray*}
		g_0(x)-g_0(\bar x) &\ge& g_0(x)+\sum_{j=1}^{m}\lambda_jg_j(x,\lambda)-g_0(\bar{x})\\
		&=&L(x,\lambda)-g_0(\bar{x})-\sum_{j=1}^{m}\lambda_jg_j(\bar{x},\lambda)\\
		&=&L(x,\lambda)-L(\bar{x},\lambda)
	\end{eqnarray*}
	and we have from the Proposition (\ref{p2}) the Lagrangian 
	$$\displaystyle L(x,\lambda)=\sum_{j=0}^{m}\lambda_{j}\left(f_j(x)-\frac{1}{2}x^TA_jx\right)$$ is $\rho-$convex, as $L(x,\lambda)-\rho\|x\|^{2}$ is a convex function; where $\displaystyle \rho=\sum_{j=0}^m\lambda_j\mu_j$. Hence by the proposition (\ref{p3}) we have 
	\begin{equation}\label{sceq1RC*}
	L(x,\lambda)-L(\bar{x},\lambda)\ge \rho \Vert x-\bar{x} \Vert^2 + (\nabla L(\bar{x},\lambda))^T (x-\bar{x}).
	\end{equation}
	Thus 
	\begin{equation}\label{sceq1RC**}
	g_0(x)-g_0(\bar x) \ge \rho \Vert x-\bar{x} \Vert^2 + (\nabla L(\bar{x},\lambda))^T (x-\bar{x}),~~\forall\, x \in \tilde D.
	\end{equation}
	The condition (\ref{SC}) implies, 
	$$\rho=\sum_{j=0}^m\lambda_j\mu_j\geq -\frac{(\nabla L(\overline{x},\lambda))^{T}(x-\overline{x})}{\|x-\overline{x}\|^{2}},$$
	for all $x \in \tilde{D}$ with $x\neq\overline{x}$. Therefore we have 
	$$\rho \Vert x-\bar{x} \Vert^2 + (\nabla L(\bar{x},\lambda))^T (x-\bar{x})\geq0,~~\forall\, x \in \tilde D$$
	and so $$g_0(x)-g_0(\bar x)\geq0,~~\forall\, x \in \tilde D.$$
	Hence the theorem follows.
\end{proof}
The following verifiable sufficient global optimality condition is the main result of this section which provides sufficiency criterion for a local minimizer to be a global minimizer for ($P_2$).
\subsection{A verifiable sufficient optimality conditions for ($P_2$)}
\begin{theorem}\label{tsc1}
	Let $\overline{x}\in\tilde{D}$ be a local minimizer of ($P_2$) and $\lambda_{0}=1$. Suppose that $\lambda_{j}\in\mathbb{R}^{+};~j\in \{1,2,\cdots,m\}$ are the Lagrangian multipliers associated with $\overline{x}\in\tilde{D}$ as given in the Lemma (\ref{lem1}) and $\mu_{j}\in\R$ are the first eigenvalues of the symmetric matrices $\displaystyle-\frac{1}{2}A_j$, for all $j\in\{0,1,2,\cdots,m\}$. If 
	\begin{equation}\label{SC1RC}
	\sum_{j=0}^{m}\lambda_{j}\sum_{i=1}^{n}\left[\frac{\chi_{i}(\overline{x})(\nabla f_{j}(\overline{x})-A_{j}\overline{x})_{i}}{(v_{i}-u_{i})}\right]\leq\sum_{j=0}^{m}\lambda_{j}\mu_{j},
	\end{equation}
	then $\overline{x}$ is a global minimizer of ($P_2$).
\end{theorem}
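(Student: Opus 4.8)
The plan is to mimic the under-estimation scheme of Theorem \ref{tsc}, but to replace the implicit infimum in (\ref{SC}) by a coordinatewise analysis that makes the hypothesis checkable. Concretely, I would first show the objective is under-estimated by the Lagrangian, then bound the Lagrangian below by its $\rho$-convex linear-quadratic minorant, and finally prove that this minorant is nonnegative on the box-product set by separating it coordinate by coordinate.

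For the first two reductions I would argue exactly as in Theorem \ref{tsc}: feasibility ($\lambda_j g_j(x)\le0$ on $\tilde D$) together with complementary slackness ($\lambda_j g_j(\overline x)=0$) gives $g_0(x)-g_0(\overline x)\ge L(x,\lambda)-L(\overline x,\lambda)$, while Proposition \ref{p2} makes each $g_j$ be $\mu_j$-convex, so $L(\cdot,\lambda)=\sum_{j=0}^m\lambda_j g_j$ is $\rho$-convex with $\rho=\sum_{j=0}^m\lambda_j\mu_j$, and Proposition \ref{p3} yields $L(x,\lambda)-L(\overline x,\lambda)\ge\rho\|x-\overline x\|^2+(\nabla L(\overline x,\lambda))^T(x-\overline x)$. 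Writing $b_i:=(\nabla L(\overline x,\lambda))_i=\sum_{j=0}^m\lambda_j(\nabla f_j(\overline x)-A_j\overline x)_i$ and noting $\rho\|x-\overline x\|^2=\sum_i\rho(x_i-\overline x_i)^2$, the task collapses to showing
\[
\sum_{i=1}^n\big[\rho(x_i-\overline x_i)^2+b_i(x_i-\overline x_i)\big]\ge0 .
\]
Because the under-estimation only uses feasibility, it is enough to verify this on the whole product set $D\supseteq\tilde D$; and since $D$ is a product of intervals $[u_i,v_i]$ ($i\in I$) and pairs $\{u_i,v_i\}$ ($i\in J$), the sum may be minimized one coordinate at a time.

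Thus the core step is purely one-dimensional: for each $i$, set $s=x_i-\overline x_i$ and $\phi_i(s)=\rho s^2+b_i s=s(\rho s+b_i)$, where $s$ ranges over $[u_i-\overline x_i,\,v_i-\overline x_i]$ for $i\in I$ and over its two endpoints for $i\in J$. When $\overline x_i=u_i$ we have $s\ge0$, so $\phi_i\ge0$ amounts to $\rho s+b_i\ge0$; as this is affine in $s$ it suffices to check the endpoints, i.e. $b_i\ge0$ (which (\ref{lcl2RC}) supplies for $i\in I$) and $\rho(v_i-u_i)+b_i\ge0$, the latter being $\rho\ge\chi_i(\overline x)b_i/(v_i-u_i)$. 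The case $\overline x_i=v_i$ is symmetric, and for $\overline x_i\in(u_i,v_i)$ condition (\ref{lcl2RC}) forces $b_i=0$, whence $\phi_i\equiv0$. In every case $\phi_i\ge0$ for all admissible $s$ provided $\chi_i(\overline x)b_i/(v_i-u_i)\le\rho$, and summing the nonnegative $\phi_i$ over $i$ finishes the argument.

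The hard part will be reconciling this coordinatewise requirement with the stated hypothesis (\ref{SC1RC}), which controls only the aggregate $\sum_i\chi_i(\overline x)b_i/(v_i-u_i)\le\rho$. Since local optimality makes each term $\chi_i(\overline x)b_i/(v_i-u_i)\le0$, the aggregate inequality is weaker than demanding $\chi_i(\overline x)b_i/(v_i-u_i)\le\rho$ for every $i$ separately, and I do not see how to recover the per-coordinate bounds from the single scalar inequality without extra information---collapsing $(x_i-\overline x_i)^2$ into $\|x-\overline x\|^2$ discards precisely the coordinate weights that matter. This is the step I would scrutinize first: I would either look for a sharper, non-separated estimate of $\sum_i b_i(x_i-\overline x_i)$ that genuinely exploits the aggregate, or conclude that the verifiable condition must be stated in the per-coordinate form $\chi_i(\overline x)b_i/(v_i-u_i)\le\sum_{j=0}^m\lambda_j\mu_j$ for each $i$, for which the argument above is immediate.
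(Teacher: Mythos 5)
Your proposal follows the same route as the paper's own proof: the paper likewise passes from $g_0(x)-g_0(\overline x)\ge L(x,\lambda)-L(\overline x,\lambda)$ (feasibility plus complementary slackness) to the minorant $\sum_{j}\lambda_j\bigl[(\nabla f_j(\overline x)-A_j\overline x)^T(x-\overline x)+\mu_j\Vert x-\overline x\Vert^2\bigr]$, then adds and subtracts the terms $\chi_i(\overline x)(\nabla f_j(\overline x)-A_j\overline x)_i(x_i-\overline x_i)^2/(v_i-u_i)$ and disposes of the linear-plus-correction piece by exactly your three cases ($\overline x_i=u_i$, $\overline x_i=v_i$, interior), using (\ref{lcl2RC}) in the same way. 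Up to the final step your computation and the paper's are the same argument written in two equivalent forms.

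The step you decline to take is precisely the step the paper takes without justification. After the add-and-subtract, the paper asserts that (\ref{SC1RC}) yields
\begin{equation*}
\sum_{j=0}^{m}\lambda_{j}\sum_{i=1}^{n}\left[\mu_{j}-\frac{\chi_{i}(\overline{x})(\nabla f_{j}(\overline{x})-A_{j}\overline{x})_{i}}{(v_{i}-u_{i})}\right](x_{i}-\overline{x}_{i})^{2}\ge0,\qquad\forall\,x\in\tilde D,
\end{equation*}
which in your notation is $\sum_i(\rho-c_i)(x_i-\overline x_i)^2\ge0$ with $c_i=\chi_i(\overline x)b_i/(v_i-u_i)$ and $\rho=\sum_j\lambda_j\mu_j$. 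Since the weights $(x_i-\overline x_i)^2$ can be varied essentially independently over the box, this requires the per-coordinate bounds $\rho\ge c_i$, whereas (\ref{SC1RC}) supplies only $\sum_i c_i\le\rho$. Because the local optimality condition forces every $c_i\le0$, the aggregate inequality is strictly weaker than the per-coordinate one whenever $\rho<0$ (take $c_1=0$, $c_2\ll0$, $\rho=-1$); when $\rho\ge0$ the desired nonnegativity holds trivially and (\ref{SC1RC}) is not needed at all. So your diagnosis is correct: the hypothesis that actually supports this proof is the coordinatewise inequality $\chi_i(\overline x)b_i/(v_i-u_i)\le\sum_{j}\lambda_j\mu_j$ for each $i$ --- which, incidentally, is exactly what the paper's own Remark derives from condition (\ref{SC}) before summing over $i$ --- and with that hypothesis your one-dimensional analysis of $\phi_i(s)=\rho s^2+b_is$ closes the argument. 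You have not missed an idea; you have located the soft spot in the published proof.
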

\begin{proof}
	Let $\overline{x} \in \tilde{D}$ be a local minimizer of ($P_2$). Now for any $x \in \tilde{D}$,
	\begin{eqnarray*}
		g_0(x)-g_0(\bar x) &\ge& g_0(x)+\sum_{j=1}^{m}\lambda_jg_j(x)-g_0(\bar{x})\\
		&=&L(x,\lambda)-g_0(\bar{x})-\sum_{j=1}^{m}\lambda_jg_j(\bar{x})\\
		&=&L(x,\lambda)-L(\bar{x},\lambda)
	\end{eqnarray*}
	and
	\begin{eqnarray*}
		L(x,\lambda)-L(\bar{x},\lambda)&=& \sum_{j=0}^{m}\lambda_{j}\left[\left(f_j(x)-\frac{1}{2}x^TA_jx\right)-\left(f_j(\overline{x})-\frac{1}{2}\overline{x}^TA_j\overline{x}\right)\right]\\
		&=& \sum_{j=0}^{m}\lambda_{j}\left(f_j(x)-f_j(\overline{x})\right)- \sum_{j=0}^{m}\frac{1}{2}\lambda_{j}\left(x^TA_jx-\overline{x}^TA_j\overline{x}\right)\\
		&=& \sum_{j=0}^{m}\lambda_{j}\left(f_j(x)-f_j(\overline{x})\right)-\sum_{j=0}^{m}\lambda_{j}\left[\frac{1}{2}(x-\overline{x})^{T}A_{j}(x-\overline{x})+(A_{j}\overline{x})^{T}(x-\overline{x})\right].
	\end{eqnarray*}
	But for all $j\in\{0,1,2,\cdots,m\}$, the convexity of $f_{j}$ implies
	$$f_{j}(x)-f_{j}(\bar{x})\ge (\nabla f_{j}(\overline{x}))^T(x-\bar{x}),~~\forall\,x \in \tilde{D}$$
	and from the definition of $\mu_{j}$ we have
	$$-\frac{1}{2}(x-\overline{x})^{T}A_{j}(x-\overline{x})\ge\mu_{j}\|x-\overline{x}\|^{2},~~~\forall\,x \in \tilde{D}.$$
	Thus for all $x \in \tilde{D}$,
	\begin{eqnarray*}
		L(x,\lambda)-L(\bar{x},\lambda)&\ge& \sum_{j=0}^{m}\lambda_{j}\left( (\nabla f_{j}(\overline{x}))^T(x-\bar{x})\right)\\
		&&+\sum_{j=0}^{m}\lambda_{j}\left[\mu_{j}\|x-\overline{x}\|^{2}-(A_{j}\overline{x})^{T}(x-\overline{x})\right]\\
		&=&\sum_{j=0}^{m}\lambda_{j}[(\nabla f_{j}(\overline{x}))-A_{j}\overline{x})^{T}(x-\overline{x})+\mu_{j}\|x-\overline{x}\|^{2}].\\
	\end{eqnarray*}
Therefore
\begin{eqnarray*}
	L(x,\lambda)-L(\bar{x},\lambda)&\ge&\sum_{j=0}^{m}\lambda_{j}\sum_{i=1}^{n}\left[(\nabla f_{j}(\overline{x}))-A_{j}\overline{x})_{i}(x_{i}-\overline{x}_{i})+\mu_{j}(x_{i}-\overline{x}_{i})^{2}\right]\\
	&=&\sum_{j=0}^{m}\lambda_{j}\sum_{i=1}^{n}\left[(\nabla f_{j}(\overline{x}))-A_{j}\overline{x})_{i}(x_{i}-\overline{x}_{i})+\frac{\chi_{i}(\overline{x})(\nabla f_{j}(\overline{x})-A_{j}\overline{x})_{i}}{(v_{i}-u_{i})}(x_{i}-\overline{x}_{i})^{2}\right]\\
	&&+\sum_{j=0}^{m}\lambda_{j}\sum_{i=1}^{n}\left[\mu_{j}-\frac{\chi_{i}(\overline{x})(\nabla f_{j}(\overline{x})-A_{j}\overline{x})_{i}}{(v_{i}-u_{i})}\right](x_{i}-\overline{x}_{i})^{2}.
\end{eqnarray*}
	The condition (\ref{SC1RC}), gives that 
	$$\sum_{j=0}^{m}\lambda_{j}\sum_{i=1}^{n}\left[\mu_{j}-\frac{\chi_{i}(\overline{x})(\nabla f_{j}(\overline{x})-A_{j}\overline{x})_{i}}{(v_{i}-u_{i})}\right](x_{i}-\overline{x}_{i})^{2}\ge0,~~\forall\,x \in \tilde{D}.$$
	Now for any $x=(x_{i})\in\tilde{D}$ and for all $i\in\{1,2,\cdots,n\}$,
	$$\sum_{j=0}^{m}\lambda_{j}\left[(\nabla f_{j}(\overline{x}))-A_{j}\overline{x})_{i}(x_{i}-\overline{x}_{i})+\frac{\chi_{i}(\overline{x})(\nabla f_{j}(\overline{x})-A_{j}\overline{x})_{i}}{(v_{i}-u_{i})}(x_{i}-\overline{x}_{i})^{2}\right]\ge0.$$
	To see this, let $x=(x_{i})\in\tilde{D}$ and $i\in\{1,2,\cdots,n\}$.\\
	\textbf{Case-1:} If $\overline{x}_{i}=u_{i}$, then by (\ref{lcl2RC}), we have $\displaystyle\sum_{j=0}^{m}\lambda_{j}(\nabla f_{j}(\overline{x})-A_{j}\overline{x})_{i}\geq0,~~\forall\,i\in I$.
	Now 
	\begin{eqnarray*}
		~&&\sum_{j=0}^{m}\lambda_{j}\left[(\nabla f_{j}(\overline{x}))-A_{j}\overline{x})_{i}(x_{i}-\overline{x}_{i})+\frac{\chi_{i}(\overline{x})(\nabla f_{j}(\overline{x})-A_{j}\overline{x})_{i}}{(v_{i}-u_{i})}(x_{i}-\overline{x}_{i})^{2}\right]\\
		&=&\sum_{j=0}^{m}\lambda_{j}\left[(\nabla f_{j}(\overline{x})-A_{j}\overline{x})_{i}(x_{i}-\overline{x}_{i})+(-1)\frac{(\nabla f_{j}(\overline{x})-A_{j}\overline{x})_{i}}{(v_{i}-u_{i})}(x_{i}-\overline{x}_{i})^{2}\right]\\
		&=&(x_{i}-u_{i})\left[\sum_{j=0}^{m}\lambda_{j}(\nabla f_{j}(\overline{x})-A_{j}\overline{x})_{i}\right]\left(1-\frac{(x_{i}-u_{i})}{(v_{i}-u_{i})}\right).
	\end{eqnarray*}
	Therefore \vspace{-0.2cm}
	$$\sum_{j=0}^{m}\lambda_{j}\left[(\nabla f_{j}(\overline{x}))-A_{j}\overline{x})_{i}(x_{i}-\overline{x}_{i})+\frac{\chi_{i}(\overline{x})(\nabla f_{j}(\overline{x})-A_{j}\overline{x})_{i}}{(v_{i}-u_{i})}(x_{i}-\overline{x}_{i})^{2}\right]$$
	is non-negative, if $i\in I$ and is equal to $0$, if $i\in J$.\\
	\textbf{Case-2:} If $\overline{x}_{i}=v_{i}$, then by (\ref{lcl2RC}), $\displaystyle\sum_{j=0}^{m}\lambda_{j}(\nabla f_{j}(\overline{x})-A_{j}\overline{x})_{i}\leq0,~~\forall\,i\in I$ and in a similar method of Case-1, we will have
	$$\sum_{j=0}^{m}\lambda_{j}\left[(\nabla f_{j}(\overline{x}))-A_{j}\overline{x})_{i}(x_{i}-\overline{x}_{i})+\frac{\chi_{i}(\overline{x})(\nabla f_{j}(\overline{x})-A_{j}\overline{x})_{i}}{(v_{i}-u_{i})}(x_{i}-\overline{x}_{i})^{2}\right]$$
	is non-negative, if $i\in I$ and is equal to $0$, if $i\in J$.\\ 
	\textbf{Case-3:} If $\overline{x}_{i}\in(u_{i},v_{i})$ and $i\in I$, then by (\ref{lcl2RC}), $\displaystyle\sum_{j=0}^{m}\lambda_{j}(\nabla f_{j}(\overline{x})-A_{j}\overline{x})_{i}=0$. Thus
	$$\sum_{j=0}^{m}\lambda_{j}\left[(\nabla f_{j}(\overline{x}))-A_{j}\overline{x})_{i}(x_{i}-\overline{x}_{i})+\frac{\chi_{i}(\overline{x})(\nabla f_{j}(\overline{x})-A_{j}\overline{x})_{i}}{(v_{i}-u_{i})}(x_{i}-\overline{x}_{i})^{2}\right]=0.$$
	The summary of the above all three cases gives us that
	$$\sum_{j=0}^{m}\lambda_{j}\sum_{i=1}^{n}\left[(\nabla f_{j}(\overline{x}))-A_{j}\overline{x})_{i}(x_{i}-\overline{x}_{i})+\frac{\chi_{i}(\overline{x})(\nabla f_{j}(\overline{x})-A_{j}\overline{x})_{i}}{(v_{i}-u_{i})}(x_{i}-\overline{x}_{i})^{2}\right]\geq0.$$
	Thereby $$f_{0}(x)-f_{0}(\overline{x})\geq L(x,\lambda)-L(\overline{x},\lambda)\geq0,$$
	for all $x \in\tilde{D}$. Hence $\overline{x}\in\tilde{D}$ is a global minimizer of ($P_2$). 
\end{proof}
The next theorem provide another sufficient condition in terms of given data, for a local minimizer to be a global minimizer of ($P_2$).
\begin{theorem}\label{tsc2}
	Let $\overline{x}\in\tilde{D}$ be a local minimizer of ($P_2$) and $\lambda_{0}=1$. Suppose that $\lambda_{j}\in\mathbb{R}^{+};~j\in \{1,2,\cdots,m\}$ are the Lagrangian multipliers associated with $\overline{x}\in\tilde{D}$ as given in the lemma (\ref{lem1}). If
	\begin{equation}\label{SC2RC}
	\sum_{j=0}^{m}\lambda_{j}\left[A_{j}+\mathrm{diag}\left(2\chi_{1}(\overline{x})\frac{(\nabla f_{j}(\overline{x})-A_{j}\overline{x})_{1}}{(v_{1}-u_{1})},
	\cdots,2\chi_{n}(\overline{x})\frac{(\nabla f_{j}(\overline{x})-A_{j}\overline{x})_{n}}{(v_{n}-u_{n})}\right)\right]\preceq0,
	\end{equation}
	then $\overline{x}$ is a global minimizer of ($P_2$).
\end{theorem}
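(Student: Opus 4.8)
The plan is to show, exactly in the spirit of the proof of Theorem \ref{tsc1}, that the Lagrangian $L(\cdot,\lambda)$ is a minimizing underestimator of the objective $g_{0}$ at $\overline{x}$ over $\tilde{D}$, the only new ingredient being that the required diagonal correction is now extracted from the matrix inequality (\ref{SC2RC}) rather than from a scalar eigenvalue bound. First I would record the standard reduction: since $\lambda_{j}g_{j}(x)\le 0$ on $\tilde{D}$ and complementary slackness gives $\lambda_{j}g_{j}(\overline{x})=0$ for $j=1,\dots,m$, one obtains $g_{0}(x)-g_{0}(\overline{x})\ge L(x,\lambda)-L(\overline{x},\lambda)$ for every $x\in\tilde{D}$. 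Hence it suffices to prove that $L(x,\lambda)-L(\overline{x},\lambda)\ge 0$ on $\tilde{D}$.

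Next I would expand $L(x,\lambda)-L(\overline{x},\lambda)=\sum_{j=0}^{m}\lambda_{j}\bigl[g_{j}(x)-g_{j}(\overline{x})\bigr]$. Using the convexity of each $f_{j}$, namely $f_{j}(x)-f_{j}(\overline{x})\ge(\nabla f_{j}(\overline{x}))^{T}(x-\overline{x})$, together with the symmetric-matrix identity $-\tfrac{1}{2}x^{T}A_{j}x+\tfrac{1}{2}\overline{x}^{T}A_{j}\overline{x}=-(A_{j}\overline{x})^{T}(x-\overline{x})-\tfrac{1}{2}(x-\overline{x})^{T}A_{j}(x-\overline{x})$, gives
$$L(x,\lambda)-L(\overline{x},\lambda)\ge\sum_{j=0}^{m}\lambda_{j}\left[(\nabla f_{j}(\overline{x})-A_{j}\overline{x})^{T}(x-\overline{x})-\tfrac{1}{2}(x-\overline{x})^{T}A_{j}(x-\overline{x})\right].$$

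The decisive step is to invoke (\ref{SC2RC}). Writing $q_{i}^{(j)}=2\chi_{i}(\overline{x})(\nabla f_{j}(\overline{x})-A_{j}\overline{x})_{i}/(v_{i}-u_{i})$ and applying the quadratic form of the negative semidefinite matrix in (\ref{SC2RC}) to the vector $x-\overline{x}$ yields $\sum_{j}\lambda_{j}(x-\overline{x})^{T}A_{j}(x-\overline{x})\le-\sum_{j}\lambda_{j}\sum_{i}q_{i}^{(j)}(x_{i}-\overline{x}_{i})^{2}$, and therefore $-\tfrac{1}{2}\sum_{j}\lambda_{j}(x-\overline{x})^{T}A_{j}(x-\overline{x})\ge\tfrac{1}{2}\sum_{j}\lambda_{j}\sum_{i}q_{i}^{(j)}(x_{i}-\overline{x}_{i})^{2}$. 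Since $\tfrac{1}{2}q_{i}^{(j)}=\chi_{i}(\overline{x})(\nabla f_{j}(\overline{x})-A_{j}\overline{x})_{i}/(v_{i}-u_{i})$, substituting this bound collapses the right-hand side precisely into
$$\sum_{j=0}^{m}\lambda_{j}\sum_{i=1}^{n}\left[(\nabla f_{j}(\overline{x})-A_{j}\overline{x})_{i}(x_{i}-\overline{x}_{i})+\frac{\chi_{i}(\overline{x})(\nabla f_{j}(\overline{x})-A_{j}\overline{x})_{i}}{(v_{i}-u_{i})}(x_{i}-\overline{x}_{i})^{2}\right],$$
which is verbatim the expression already shown to be nonnegative in the proof of Theorem \ref{tsc1}.

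Finally I would transcribe the three-case argument of Theorem \ref{tsc1}: the local necessary condition (\ref{lcl2RC}) of Theorem \ref{TlcRC} fixes the sign of $\sum_{j}\lambda_{j}(\nabla f_{j}(\overline{x})-A_{j}\overline{x})_{i}$ in each of the cases $\overline{x}_{i}=u_{i}$, $\overline{x}_{i}=v_{i}$, and $\overline{x}_{i}\in(u_{i},v_{i})$, so that every coordinate term is nonnegative for $i\in I$ and vanishes for $i\in J$. This gives $L(x,\lambda)-L(\overline{x},\lambda)\ge 0$ and hence $g_{0}(x)\ge g_{0}(\overline{x})$ for all $x\in\tilde{D}$, i.e.\ $\overline{x}$ is a global minimizer. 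I expect the only genuine obstacle to be the sign bookkeeping in the decisive step, namely checking that the plus sign on the diagonal together with the $\preceq 0$ in (\ref{SC2RC}) combine to produce the correct lower bound $+\tfrac{1}{2}\sum_{i}q_{i}^{(j)}(x_{i}-\overline{x}_{i})^{2}$; once this orientation is settled, the rest is a direct reuse of the earlier coordinate-wise analysis.
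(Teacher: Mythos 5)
Your proposal is correct and follows essentially the same route as the paper: reduce to $L(x,\lambda)-L(\overline{x},\lambda)\ge 0$ via complementary slackness, lower-bound via convexity of the $f_j$ and the symmetric expansion of the quadratic part, absorb the diagonal correction using the negative semidefiniteness in (\ref{SC2RC}), and finish with the same three-case coordinate argument based on (\ref{lcl2RC}). Your sign bookkeeping in the decisive step checks out, and if anything your phrasing (applying the quadratic form of the full summed matrix to $x-\overline{x}$) is slightly cleaner than the paper's, which informally passes from $\sum_j\lambda_j[A_j+\mathrm{diag}(q^{(j)})]\preceq 0$ to each bracket being $\preceq 0$ even though only the summed inequality is needed or justified.
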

\begin{proof}
	Let $\overline{x}\in\tilde{D}$ be a local minimizer and take, for each $i\in \{1,2,3,\cdots,n\}$ and $j\in \{0,1,2,\cdots,m\}$,
	$$q_{i}^{(j)}=2\chi_{i}(\overline{x})\frac{(\nabla f_{j}(\overline{x})-A_{j}\overline{x})_{i}}{(v_{i}-u_{i})}.$$
	Now from the Kuhn-Tucker conditions for (MP), we have there exists $\lambda_{j}\in\mathbb{R}^{+};~j\in \{0,1,2,\cdots,m\}$ with $\lambda_{0}=1$ such that
	$f_{0}(\overline{x})= L(\overline{x},\lambda)$ and $$f_{0}(x)\geq L(x,\lambda),~~\forall\,x \in\tilde{D}.$$
	So we have
	\begin{equation}\label{eq1RC2}
	f_{0}(x)-f_{0}(\overline{x})\geq L(x,\lambda)-L(\overline{x},\lambda),~~\forall\,x \in\tilde{D}.
	\end{equation}
	Now for any $x \in\tilde{D}$,
	\begin{eqnarray*}
		L(x,\lambda)-L(\overline{x},\lambda)&=&\sum_{j=0}^{m}\lambda_{j}\left[\left(f_j(x)-\frac{1}{2}x^TA_jx\right)-\left(f_j(\overline{x})-\frac{1}{2}\overline{x}^TA_j\overline{x}\right)\right]\\
		&=& \sum_{j=0}^{m}\lambda_{j}\left(f_j(x)-f_j(\overline{x})\right)- \sum_{j=0}^{m}\frac{1}{2}\lambda_{j}\left(x^TA_jx-\overline{x}^TA_j\overline{x}\right)\\
		&=& \sum_{j=0}^{m}\lambda_{j}\left(f_j(x)-f_j(\overline{x})\right)-\sum_{j=0}^{m}\lambda_{j}\left[\frac{1}{2}(x-\overline{x})^{T}A_{j}(x-\overline{x})+(A_{j}\overline{x})^{T}(x-\overline{x})\right].\\		
	\end{eqnarray*}
Thus, \begin{eqnarray*}
	L(x,\lambda)-L(\overline{x},\lambda)&\geq&\sum_{j=0}^{m}-\frac{1}{2}\lambda_{j}\left[(x-\overline{x})^{T}A_{j}(x-\overline{x})+(\nabla f_{j}(\overline{x})-A_{j}\overline{x})^{T}(x-\overline{x})\right]\\&=&\sum_{j=0}^{m}-\frac{1}{2}\lambda_{j}(x-\overline{x})^{T}[A_{j}+
	\mathrm{diag}(q_{1}^{(j)},q_{2}^{(j)},\cdots,q_{n}^{(j)})](x-\overline{x})\\
	&&+\sum_{j=0}^{m}\frac{1}{2}\lambda_{j}(x-\overline{x})^{T}\mathrm{diag}
	(q_{1}^{(j)},q_{2}^{(j)},\cdots,q_{n}^{(j)})(x-\overline{x})\\
	&&+\sum_{j=0}^{m}\lambda_{j}(\nabla f_{j}(\overline{x})-A_{j}\overline{x})^{T}(x-\overline{x})\\
	&=&\sum_{j=0}^{m}-\frac{1}{2}\lambda_{j}(x-\overline{x})^{T}[A_{j}+
	\mathrm{diag}(q_{1}^{(j)},q_{2}^{(j)},\cdots,q_{n}^{(j)})](x-\overline{x})\\
	&&+\sum_{j=0}^{m}\lambda_{j}\left[\sum_{i=1}^{n}\frac{1}{2}q_{i}^{(j)}(x_{i}-\overline{x}_{i})^{2}+(\nabla f_{j}(\overline{x})-A_{j}\overline{x})_{i}(x_{i}-\overline{x}_{i})\right].\\
\end{eqnarray*}
	From the condition (\ref{SC2RC}), we have $[A_{j}+\mathrm{diag}(q_{1}^{(j)},q_{2}^{(j)},\cdots,q_{n}^{(j)})]\preceq0$.
	Thus, for all $x \in\tilde{D}$, $$\sum_{j=0}^{m}-\frac{1}{2}\lambda_{j}(x-\overline{x})^{T}[A_{j}+
	\mathrm{diag}(q_{1}^{(j)},q_{2}^{(j)},\cdots,q_{n}^{(j)})](x-\overline{x})\geq0.$$
	Therefore $$L(x,\lambda)-L(\overline{x},\lambda)\geq\sum_{j=0}^{m}\lambda_{j}\left[\sum_{i=1}^{n}\frac{1}{2}q_{i}^{(j)}(x_{i}-\overline{x}_{i})^{2}+(\nabla f_{j}(\overline{x})-A_{j}\overline{x})_{i}(x_{i}-\overline{x}_{i})\right],$$ for all $x \in\tilde{D}$. Now our claim is 
	$$\sum_{j=0}^{m}\lambda_{j}\left[\sum_{i=1}^{n}\frac{1}{2}q_{i}^{(j)}(x_{i}-\overline{x}_{i})^{2}+(\nabla f_{j}(\overline{x})-A_{j}\overline{x})_{i}(x_{i}-\overline{x}_{i})\right]\geq0,$$ for all $x \in\tilde{D}$, and it follows from the necessary local optimality condition (\ref{lcl2RC}). \\For let $x=(x_{i})\in\tilde{D}$ arbitrarily.\\
	\textbf{Case-1:} If $\overline{x}_{i}=u_{i}$, then we have $\displaystyle\sum_{j=0}^{m}\lambda_{j}(\nabla f_{j}(\overline{x})-A_{j}\overline{x})_{i}\geq0$.
	Now 
	\begin{eqnarray*}
		~&&\sum_{j=0}^{m}\lambda_{j}\left[\frac{1}{2}q_{i}^{(j)}(x_{i}-\overline{x}_{i})^{2}+(\nabla f_{j}(\overline{x})-A_{j}\overline{x})_{i}(x_{i}-\overline{x}_{i})\right]\\
		&=&\sum_{j=0}^{m}\lambda_{j}\left[(-1)\frac{(\nabla f_{j}(\overline{x})-A_{j}\overline{x})_{i}}{(v_{i}-u_{i})}(x_{i}-\overline{x}_{i})^{2}+(\nabla f_{j}(\overline{x})-A_{j}\overline{x})_{i}(x_{i}-\overline{x}_{i})\right]\\
		&=&(x_{i}-u_{i})\left[\sum_{j=0}^{m}\lambda_{j}\frac{(\nabla f_{j}(\overline{x})-A_{j}\overline{x})_{i}}{(v_{i}-u_{i})}\right]\left(1-\frac{(x_{i}-u_{i})}{(v_{i}-u_{i})}\right).
	\end{eqnarray*}
	Therefore 
	$$\sum_{j=0}^{m}\lambda_{j}\left[\frac{1}{2}q_{i}^{(j)}(x_{i}-\overline{x}_{i})^{2}+(\nabla f_{j}(\overline{x})-A_{j}\overline{x})_{i}(x_{i}-\overline{x}_{i})\right]$$
	is non-negative, if $i\in I$ and is equal to $0$, if $i\in J$.\\
	\textbf{Case-2:} If $\overline{x}_{i}=v_{i}$, then $\displaystyle\sum_{j=0}^{m}\lambda_{j}(\nabla f_{j}(\overline{x})-A_{j}\overline{x})_{i}\leq0$ and in a similar method of Case-1, we will have
	$$\sum_{j=0}^{m}\lambda_{j}\left[\frac{1}{2}q_{i}^{(j)}(x_{i}-\overline{x}_{i})^{2}+(\nabla f_{j}(\overline{x})-A_{j}\overline{x})_{i}(x_{i}-\overline{x}_{i})\right]$$
	is non-negative, if $i\in I$ and is equal to $0$, if $i\in J$.\\ 
	\textbf{Case-3:} If $\overline{x}_{i}\in(u_{i},v_{i})$ and $i\in I$, then $\displaystyle\sum_{j=0}^{m}\lambda_{j}(\nabla f_{j}(\overline{x})-A_{j}\overline{x})_{i}=0$
	and so $\displaystyle\sum_{j=0}^{m}\lambda_{j}q_{i}^{(j)}=0$. Thus
	$$\sum_{j=0}^{m}\lambda_{j}\left[\frac{1}{2}q_{i}^{(j)}(x_{i}-\overline{x}_{i})^{2}+(\nabla f_{j}(\overline{x})-A_{j}\overline{x})_{i}(x_{i}-\overline{x}_{i})\right]=0.$$
	The summary of the above all three cases gives us that
	$$\sum_{j=0}^{m}\lambda_{j}\left[\sum_{i=1}^{n}\frac{1}{2}q_{i}^{(j)}(x_{i}-\overline{x}_{i})^{2}+(\nabla f_{j}(\overline{x})-A_{j}\overline{x})_{i}(x_{i}-\overline{x}_{i})\right]\geq0.$$
	Thereby $$f_{0}(x)-f_{0}(\overline{x})\geq L(x,\lambda)-L(\overline{x},\lambda)\geq0,$$
	for all $x \in\tilde{D}$. Hence $\overline{x}\in\tilde{D}$ is a global minimizer of ($P_2$).
\end{proof}
\begin{example}
	Consider the following quadratic weakly convex minimization problem ($E_3$):
	\begin{eqnarray*}
		\mbox{($E_3$)~~}~~\min_{(x_1,x_2)^\intercal\in\mathbb{R}^{2}} g_0\begin{pmatrix}
			x_1 \\ x_2
		\end{pmatrix}&=&\min_{(x_1,x_2)^\intercal\in\mathbb{R}^{2}} (1+x_1)^4+x_1^2+x_2^2-4x_1x_2+6x_1+6x_2 \\
		\mbox{s.t.~~}~~ g_1\begin{pmatrix}
			x_1 \\ x_2
		\end{pmatrix}&=&(1+x_2)^4+x_1^2+x_2^2-2\leq 0,\\
		&&x_1\in [-1,1],\\
		&&x_2\in \{-1,1\}.
	\end{eqnarray*}
	We can write $g_0\begin{pmatrix}
	x_1 \\ x_2
	\end{pmatrix}$ and $g_1\begin{pmatrix}
	x_1 \\ x_2
	\end{pmatrix}$ in the form 
	$$g_0(\mathbf{x})=f_0(\mathbf{x})-\dfrac{1}{2}\mathbf{x}^\intercal A_0\mathbf{x}\mbox{~~and~~}g_1(\mathbf{x})=f_1(\mathbf{x})-\dfrac{1}{2}\mathbf{x}^\intercal A_1\mathbf{x}$$  respectively, where
	$\mathbf{x}=\begin{pmatrix}
	x_1 \\ x_2
	\end{pmatrix}$,  $A_0=\begin{bmatrix}
	-2 & ~\,\,4\\ ~\,\,4 & -2
	\end{bmatrix}$, $A_1=\begin{bmatrix}
	-2 & ~\,\,0\\ ~\,\,0 & -2
	\end{bmatrix}$, and $f_0(\mathbf{x})=(1+x_1)^4+6x_1+6x_2$, $f_1(\mathbf{x})=(1+x_2)^4-2$ are convex functions. Let $u_1=u_2=-1$ and $v_1=v_2=1$.
	The Lagrangian of ($E_3$) is 
	\begin{equation*}\label{lagE3}
	L(\mathbf{x},\lambda,\mu,\nu)=[(1+x_1)^4+x_1^2+x_2^2-4x_1x_2+6x_1+6x_2]+\lambda((1+x_2)^4+x_1^2+x_2^2-2)+\mu(x_1^2-1)+\nu(x_2^2-1).
	\end{equation*}
	Apply the KKT conditions for ($E_3$). Then, we get
	\begin{itemize}
		\item[(1)] $\dfrac{\partial L(\mathbf{x},\lambda,\mu,\nu)}{\partial x_1}=[4(1+x_1)^3+2x_1-4x_2+6]+2\lambda x_1+2\mu x_1=0$,
		\item[(2)] $\lambda,\mu\geq0$, $(1+x_2)^4+x_1^2+x_2^2-2\leq 0$, $(x_1^2-1)\leq0$,
		\item[(3)] $\lambda((1+x_2)^4+x_1^2+x_2^2-2)=0$,
		\item[(4)] $\mu(x_1^2-1)=0$,
		\item[(5)] $\nu=0$ and $x_2=-1$ or $x_2=1$.
	\end{itemize}
There are infinitely many solutions $\{(-1,-1,\lambda,\mu,0)~\mid~\lambda+\mu=4~~\mbox{and}~~\lambda,\mu\geq0\}$ of KKT  for the problem ($E_3$). Notice that the linear independence constraint qualification is satisfied in this problem,  for $\mathbf{x}=(-1,-1)^\intercal$. Direct calculations show that the first eigenvalues of $-\frac{1}{2}A_0$ and $-\frac{1}{2}A_1$ are
	$\mu_0=3\mbox{~~and~~}\mu_1=1$ 
	respectively. One can directly see that the global optimality conditions (\ref{SC1RC}) and (\ref{SC2RC}) are satisfied for all $\lambda\in[0,4]$. Therefore, $\mathbf{x}=(-1,-1)^\intercal$ is the global minimizer.
\end{example}
\begin{remark}
Let $\overline{x}\in\tilde{D}$ be a local minimizer of ($P_2$) and $\lambda_{0}=1$. Suppose that $\lambda_{j}\in\mathbb{R}^{+};~j\in \{1,2,\cdots,m\}$ are the Lagrangian multipliers associated with $\overline{x}\in\tilde{D}$ as given in the lemma (\ref{lem1}) and $\mu_{j}\in\R$ are the first eigenvalues of the symmetric matrices $A_{j}$, for all $j\in\{0,1,2,\cdots,m\}$, then the condition (\ref{SC}) implies (\ref{SC1RC}) and the condition (\ref{SC1RC}) implies (\ref{SC2RC}). That is,
\begin{equation*}
\sum_{j=0}^m\lambda_j\mu_j\geq-\inf_{x\neq\overline{x}}\frac{(\nabla L(\overline{x},\lambda))^{T}(x-\overline{x})}{\|x-\overline{x}\|^{2}}\Longrightarrow\sum_{j=0}^{m}\lambda_{j}\sum_{i=1}^{n}\left[\frac{\chi_{i}(\overline{x})(\nabla f_{j}(\overline{x})-A_{j}\overline{x})_{i}}{(v_{i}-u_{i})}\right]\leq\sum_{j=0}^{m}\lambda_{j}\mu_{j}
\end{equation*}
and
\begin{eqnarray*}
	&&\sum_{j=0}^{m}\lambda_{j}\sum_{i=1}^{n}\left[\frac{\chi_{i}(\overline{x})(\nabla f_{j}(\overline{x})-A_{j}\overline{x})_{i}}{(v_{i}-u_{i})}\right]\leq\sum_{j=0}^{m}\lambda_{j}\mu_{j}\\
	&\Longrightarrow&\sum_{j=0}^{m}\lambda_{j}\left[A_{j}+\mathrm{diag}\left(2\chi_{1}(\overline{x})\frac{(\nabla f_{j}(\overline{x})-A_{j}\overline{x})_{1}}{(v_{1}-u_{1})},
	\cdots,2\chi_{n}(\overline{x})\frac{(\nabla f_{j}(\overline{x})-A_{j}\overline{x})_{n}}{(v_{n}-u_{n})}\right)\right]\preceq0
\end{eqnarray*}
For,let $\overline{x}\in\tilde{D}$ be a local minimizer of ($P_2$). Suppose that
$$\sum_{j=0}^m\lambda_j\mu_j\geq-\inf_{x\neq\overline{x}}\frac{(\nabla L(\overline{x},\lambda))^{T}(x-\overline{x})}{\|x-\overline{x}\|^{2}}.$$
Then 
\begin{equation}\label{eq1RC3}
\sum_{j=0}^m\lambda_j\mu_j\geq-\frac{\sum_{i=1}^{n}(\nabla L(\overline{x},\lambda))_{i}
	(x-\overline{x})_{i}}{\sum_{i=1}^{n}(x_{i}-\overline{x}_{i})^{2}},~~\forall\,x=(x_{i})\in\tilde{D}~~\mbox{~with~}~~x\neq\overline{x}.
\end{equation}
Now let $i\in \{1,2,\cdots,n\}$ and $t\in[u_{i},v_{i}]$ with $t\neq\overline{x}_{i}$ and define the vector $x=(x_{k})\in\mathbb{R}^{n}$ such that
$$x_{k}:=\left\{
\begin{array}{ll}
t & \mathrm{if}\ k=i \\
\overline{x}_{k} & \mathrm{if}\ k\neq i.\\
\end{array}%
\right. $$
Then clearly $x\neq\overline{x}$ and (\ref{eq1RC3}) splits into
$$\sum_{j=0}^m\lambda_j\mu_j\geq-\frac{(\nabla L(\overline{x},\lambda))_{i}
}{(t-\overline{x}_{i})}.$$
That is, for each $i\in \{1,2,\cdots,n\}$,
\begin{equation}\label{eq1RC4}
\sum_{j=0}^m\lambda_j\mu_j\geq-\frac{(\nabla L(\overline{x},\lambda))_{i}
}{(t-\overline{x}_{i})},~~\forall\,t\in[u_{i},v_{i}]~~\mbox{~with~}~~t\neq\overline{x}_{i}.
\end{equation}
\textbf{Case-1:} If $\overline{x}_{i}=u_{i}$, then choosing $t=v_{i}$ and by the local optimality condition (\ref{lcl2RC}), we have 
$$\displaystyle (\nabla L(\overline{x},\lambda))_{i}=\sum_{j=0}^{m}\lambda_{j}(\nabla f_{j}(\overline{x})-A_{j}\overline{x})_{i}\geq0$$
as $\chi_{i}(\overline{x})=-1$ and 
$$-\frac{(\nabla L(\overline{x},\lambda))_{i}}{(v_{i}-u_{i})}=\sum_{j=0}^{m}\lambda_{j}\frac{\chi_{i}(\overline{x})(\nabla f_{j}(\overline{x})
	-A_{j}\overline{x})_{i}}{(v_{i}-u_{i})}\leq0,~~\forall\,i\in \{1,2,\cdots,n\}.$$
Thus $$\sum_{j=0}^m\lambda_j\mu_j\geq\sum_{j=0}^{m}\lambda_{j}\frac{\chi_{i}(\overline{x})(\nabla f_{j}(\overline{x})
	-A_{j}\overline{x})_{i}}{(v_{i}-u_{i})}\geq\sum_{j=0}^{m}\lambda_{j}\sum_{i=1}^{n}\left[\frac{\chi_{i}(\overline{x})(\nabla f_{j}(\overline{x})-A_{j}\overline{x})_{i}}{(v_{i}-u_{i})}\right].$$
\textbf{Case-2:} If $\overline{x}_{i}=v_{i}$, then choosing $t=u_{i}$ and by the local optimality condition (\ref{lcl2RC}), we have 
$$\displaystyle (\nabla L(\overline{x},\lambda))_{i}=\sum_{j=0}^{m}\lambda_{j}(\nabla f_{j}(\overline{x})-A_{j}\overline{x})_{i}\leq0$$
as $\chi_{i}(\overline{x})=1$ and 
$$-\frac{(\nabla L(\overline{x},\lambda))_{i}}{(v_{i}-u_{i})}=\sum_{j=0}^{m}\lambda_{j}\frac{\chi_{i}(\overline{x})(\nabla f_{j}(\overline{x})
	-A_{j}\overline{x})_{i}}{(v_{i}-u_{i})}\leq0,~~\forall\,i\in \{1,2,\cdots,n\}.$$
Thus $$\sum_{j=0}^m\lambda_j\mu_j\geq\sum_{j=0}^{m}\lambda_{j}\frac{\chi_{i}(\overline{x})(\nabla f_{j}(\overline{x})
	-A_{j}\overline{x})_{i}}{(v_{i}-u_{i})}\geq\sum_{j=0}^{m}\lambda_{j}\sum_{i=1}^{n}\left[\frac{\chi_{i}(\overline{x})(\nabla f_{j}(\overline{x})-A_{j}\overline{x})_{i}}{(v_{i}-u_{i})}\right].$$
\textbf{Case-3:} If $\overline{x}_{i}\in(u_{i},v_{i})$ and $i\in I$, then by the local optimality condition (\ref{lcl2RC}), we have 
$$\left(\sum_{j=0}^{m}\lambda_{j}(\nabla f_{j}(\overline{x})-A_{j}\overline{x})_{i}\right)^{2}\leq0$$ as $\chi_{i}(\overline{x})=\displaystyle\sum_{j=0}^{m}\lambda_{j}(\nabla f_{j}(\overline{x})-A_{j}\overline{x})_{i}$. That is, $$\nabla L(\overline{x},\lambda)=\left(\sum_{j=0}^{m}\lambda_{j}(\nabla f_{j}(\overline{x})-A_{j}\overline{x})_{i}\right)=0.$$
Thus (\ref{eq1RC4}) implies $$\sum_{j=0}^m\lambda_j\mu_j\geq0=\sum_{j=0}^{m}\lambda_{j}\frac{\chi_{i}(\overline{x})(\nabla f_{j}(\overline{x})
	-A_{j}\overline{x})_{i}}{(v_{i}-u_{i})}=\sum_{j=0}^{m}\lambda_{j}\sum_{i=1}^{n}\left[\frac{\chi_{i}(\overline{x})(\nabla f_{j}(\overline{x})-A_{j}\overline{x})_{i}}{(v_{i}-u_{i})}\right].$$
Summing up all three above cases, the first implication follows.\\
Now we see the second implication, for suppose that
\begin{equation*}\label{eq1RC5}
\sum_{j=0}^{m}\lambda_{j}\sum_{i=1}^{n}\left[\frac{\chi_{i}(\overline{x})(\nabla f_{j}(\overline{x})-A_{j}\overline{x})_{i}}{(v_{i}-u_{i})}\right]\leq\sum_{j=0}^{m}\lambda_{j}\mu_{j}.
\end{equation*}
Let $x\in\R^{n}$, then
\begin{eqnarray*}
	&&x^{T}\left(\sum_{j=0}^{m}\lambda_{j}\left[A_{j}+\mathrm{diag}\left(2\chi_{1}(\overline{x})\frac{(\nabla f_{j}(\overline{x})-A_{j}\overline{x})_{1}}{(v_{1}-u_{1})},
	\cdots,2\chi_{n}(\overline{x})\frac{(\nabla f_{j}(\overline{x})-A_{j}\overline{x})_{n}}{(v_{n}-u_{n})}\right)\right]\right)x\\
	&=&\sum_{j=0}^{m}\lambda_{j}\left[x^{T}A_{j}x+x^{T}\mathrm{diag}\left(2\chi_{1}(\overline{x})\frac{(\nabla f_{j}(\overline{x})-A_{j}\overline{x})_{1}}{(v_{1}-u_{1})},
	\cdots,2\chi_{n}(\overline{x})\frac{(\nabla f_{j}(\overline{x})-A_{j}\overline{x})_{n}}{(v_{n}-u_{n})}\right)x\right]\\
	&\leq&\sum_{j=0}^{m}\lambda_{j}\left[-2\mu_{j}\|x\|^{2}+x^{T}\mathrm{diag}\left(2\chi_{1}(\overline{x})\frac{(\nabla f_{j}(\overline{x})-A_{j}\overline{x})_{1}}{(v_{1}-u_{1})},
	\cdots,2\chi_{n}(\overline{x})\frac{(\nabla f_{j}(\overline{x})-A_{j}\overline{x})_{n}}{(v_{n}-u_{n})}\right)x\right]\\
	&=&-2\sum_{j=0}^{m}\lambda_{j}\sum_{i=1}^{n}\left[\mu_{j}-\frac{\chi_{i}(\overline{x})(\nabla f_{j}(\overline{x})-A_{j}\overline{x})_{i}}{(v_{i}-u_{i})}\right]x_{i}^{2}\\
	&=&-2\sum_{j=0}^{m}\lambda_{j}\left[\mu_{j}-\sum_{i=1}^{n}\frac{\chi_{i}(\overline{x})(\nabla f_{j}(\overline{x})-A_{j}\overline{x})_{i}}{(v_{i}-u_{i})}\right]x_{i}^{2}\\
	&\leq&0.
\end{eqnarray*}
Therefore the second implication holds. Hence the remark follows.
\end{remark}
\section{Quadratic Fractional Programming Problem}
In this section, results of  problem ($P_1$) have been extended for the following quadratic fractional programming model problem with both continuous and discrete variables:
\begin{eqnarray*}
	\mbox{($P_3$)~~}~~\min_{x\in\mathbb{R}^{n}}s(x)=\min_{x\in\mathbb{R}^{n}} \frac{f_{0}(x)}{g_{0}(x)}&=&\min_{x\in\mathbb{R}^{n}} \frac{\frac{1}{2}x^{T}A_{0}x+a_{0}^{T}x+c_{0}}{\frac{1}{2}x^{T}B_{0}x+b_{0}^{T}x+d_{0}}\\
	\mbox{subject to~~}~~\frac{f_{j}(x)}{g_{j}(x)} &=&\frac{\frac{1}{2}x^{T}A_{j}x+a_{j}^{T}x+c_{j}}{\frac{1}{2}x^{T}B_{j}x+b_{j}^{T}x+d_{j}}\leq e_j,~~\forall~j\in\{1,2,3,\cdots,m\}\\
	&&x_{i} \in [u_{i},v_{i}], ~i\in I \\
	&&x_{i} \in \{u_{i},v_{i}\}, ~i\in J ;
\end{eqnarray*}
where $I\cap J=\phi$, $I\cup J=\{1,2,3,\cdots,n\}$, for each $j\in\{0,1,2,3,\cdots,m\}$,  $A_{j}=(a_{st}^{(j)})$, $B_{j}=(b_{st}^{(j)})$ are $n\times n$ symmetric matrices, $a_{j}=(a_{r}^{(j)}),b_{j}=(b_{r}^{(j)})\in\mathbb{R}^{n}$, $c_{j},d_{j}, e_j\in\mathbb{R}$, 
and $u_{i},v_{i}\in\mathbb{R}$ with $u_{i}<v_{i},$ for all $i\in \{1,2,3,\cdots,n\}$.\\
Now for $\lambda=(\lambda_{j})\in\mathbb{R}^{m}_{+}$, define the Lagrangian $L(\cdot,\lambda)$ of ($P_3$) by
\begin{equation}\label{lag_MPQF}
L(x,\lambda):=\frac{f_{0}(x)}{g_{0}(x)}+\sum_{j=1}^{m}\lambda_{j}\left( \frac{f_{j}(x)}{g_{j}(x)}-e_j\right) ;~~x\in\mathbb{R}^{n}.
\end{equation}
That is, \begin{equation}
\label{lagQF}L(x,\lambda)=\frac{\frac{1}{2}x^{T}A_{0}x+a_{0}^{T}x+c_{0}}{\frac{1}{2}x^{T}B_{0}x+b_{0}^{T}x+d_{0}}+\sum_{j=1}^{m}\lambda_{j}\left(\frac{\frac{1}{2}x^{T}A_{j}x+a_{j}^{T}x+c_{j}}{\frac{1}{2}x^{T}B_{j}x+b_{j}^{T}x+d_{j}}-e_j\right).
\end{equation} 
For $\overline{x}\in D$ and $i\in \{1,2,3,\cdots,n\}$, define
\begin{equation}\label{eq1QF}
\tilde{\chi}_{i}(\overline{x}):=\left\{
\begin{array}{lll}
-1 & \mathrm{if}\ \overline{x}_i=u_i \\
+1 & \mathrm{if}\ \overline{x}_i=v_i\\
c\nabla L(\overline{x},\lambda)_{i} & \mathrm{if}\ \overline{x}_i\in (u_i ,v_i );
\end{array}%
\right. 
\end{equation}
where $c=\frac{1}{2}\overline{x}^{T}B_{0}\overline{x}+b_{0}^{T}\overline{x}+d_{0}$. 
Assume for each $x\in D$, $g_j(x)>0$, if $j\in I_P\cup\{0\}$ and $g_j(x)<0$, if $j\in I_N$,
where $I_P\cap I_N=\phi$ and $I_P\cup I_N=\{1,2,3,\cdots m\}$. Let
\begin{equation*}
\xi_j:=\left\{
\begin{array}{lll}
+1 & \mathrm{if}\ j\in I_P\cup\{0\} \\
-1 & \mathrm{if}\ j\in I_N.
\end{array}%
\right. 
\end{equation*}	
Suppose that, $\overline{x}\in\tilde{D}$ is a local minimizer of ($P_3$). Now we formulate a quadratic programming problem ($P_3^\ast$) by preserving the local and global minimizers of the original fractional programming problem ($P_3$) as follows:
\begin{eqnarray*}
	\mbox{($P_3^\ast$)~~}~~\min_{x\in\mathbb{R}^{n}} \mathfrak{h}_{0}(x)&=&\min_{x\in\mathbb{R}^{n}} \frac{1}{2}x^{T}\mathfrak{Q}_0x+\mathfrak{q}_{0}^{T}x+\mathfrak{r}_{0} \\
	\mbox{s.t.~~}~~ \mathfrak{h}_{j}(x) &=&\frac{1}{2}x^{T}\mathfrak{Q}_{j}x+\mathfrak{q}_{j}^{T}x+\mathfrak{r}_{j}\leq 0,~~\forall~j\in\{1,2,3,\cdots,m\}\\
	&&x_{i} \in [u_{i},v_{i}], ~i\in I \\
	&&x_{i} \in \{u_{i},v_{i}\}, ~i\in J ;
\end{eqnarray*}
where for each $j=0,1,2,\ldots,m$,
$\mathfrak{Q}_{j}=\xi_j(A_j-e_jB_j)$, $\mathfrak{q}_{j}=\xi_j(a_j-e_jb_j)$, $\mathfrak{r}_{j}=\xi_j(c_j-e_jd_j)$, $e_0=s(\overline{x})$. One can easily see that $\mathfrak{h}_{0}(\overline{x})=0$ and for each $x\in D$, 
\begin{equation}\label{lomin}
\mathfrak{h}_{0}(x)-\mathfrak{h}_{0}(\overline{x})=g_0(x)(s(x)-s(\overline{x})).
\end{equation}
Hence $\overline{x}$ is a local minimizer of ($P_3$) if and only if $\overline{x}$ is a local minimizer of ($P_3^\ast$). Since $g_0(x)>0$, for all $x\in D$, we have from (\ref{lomin}), $\overline{x}$ is a global minimizer of ($P_3$) if and only if $\overline{x}$ is a global minimizer of ($P_3^\ast$).\\
The Lagrangian of ($P_3$) can be written as $$\mathfrak{L}(x,\lambda)=\mathfrak{h}_{0}(x)+\sum_{j=1}^{m}\lambda_{j}\mathfrak{h}_{j}(x).$$ 
\begin{lemma}\label{LM}
If $\lambda=(\lambda_{j})\in\mathbb{R}^{m}_{+}$ is the Lagrange multiplier of ($P_3$) associated with $\overline{x}\in \tilde{D}$, then  $\mu=(\mu_{j})$ is the Lagrange multiplier of ($P_3^\ast$), where $\mu_j=c\lambda_j(\frac{1}{2}\overline{x}^{T}B_{j}\overline{x}+b_{j}^{T}\overline{x}+d_{j})^{-1}$, for all $j=1,2,3,\cdots,m$ and $c=\frac{1}{2}\overline{x}^{T}B_{0}\overline{x}+b_{0}^{T}\overline{x}+d_{0}$.
\end{lemma}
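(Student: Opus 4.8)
The plan is to show that the proposed $\mu=(\mu_j)$ satisfies the Karush--Kuhn--Tucker conditions for $(P_3^\ast)$ at $\overline{x}$, given that $\lambda=(\lambda_j)$ satisfies them for $(P_3)$. Primal feasibility and the equivalence of local/global minimizers of $(P_3)$ and $(P_3^\ast)$ have already been settled through \eqref{lomin}, so the genuine content of the lemma is the transfer of the stationarity, dual feasibility and complementary slackness conditions. The engine of the whole argument is a single proportionality between the two Lagrangian gradients at $\overline{x}$, namely $\nabla_x\mathfrak{L}(\overline{x},\mu)=c\,\nabla_x L(\overline{x},\lambda)$ with $c=g_0(\overline{x})>0$; once this is in hand, the variational inequality of Lemma~\ref{lem1} for $(P_3)$ carries over verbatim to $(P_3^\ast)$, because multiplication by the positive scalar $c$ preserves the sign of $\nabla_x L(\overline{x},\lambda)(x-\overline{x})$.

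First I would record the two algebraic identities driving the reformulation: from the definitions of $\mathfrak{Q}_j,\mathfrak{q}_j,\mathfrak{r}_j$ one has $\mathfrak{h}_j(x)=\xi_j(f_j(x)-e_jg_j(x))$, and hence $\nabla\mathfrak{h}_j(\overline{x})=\xi_j(\nabla f_j(\overline{x})-e_j\nabla g_j(\overline{x}))$, for every $j\in\{0,1,\dots,m\}$. For the index $j=0$, substituting $e_0=s(\overline{x})=f_0(\overline{x})/g_0(\overline{x})$ and $\xi_0=1$ and comparing with the quotient rule gives $\nabla\mathfrak{h}_0(\overline{x})=c\,\nabla(f_0/g_0)(\overline{x})$ outright, with no appeal to the constraints.

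Next I would handle the constraint terms one index at a time, separating active from inactive constraints via complementary slackness for $(P_3)$. If the $j$-th constraint is inactive then $\lambda_j=0$, whence $\mu_j=c\lambda_j/g_j(\overline{x})=0$ and the term disappears from both gradients. If it is active, then $f_j(\overline{x})=e_jg_j(\overline{x})$, which is precisely the relation that cancels the cross term $f_j\nabla g_j$ in the quotient rule and collapses $\nabla(f_j/g_j)(\overline{x})$ to $(\nabla f_j(\overline{x})-e_j\nabla g_j(\overline{x}))/g_j(\overline{x})$; combining this with the expression for $\nabla\mathfrak{h}_j(\overline{x})$ and the definition of $\mu_j$ yields the term-by-term identity $\mu_j\nabla\mathfrak{h}_j(\overline{x})=c\,\lambda_j\nabla(f_j/g_j)(\overline{x})$. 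Summing the $j=0$ term with the constraint terms produces the sought proportionality. The same activeness relation, applied to values rather than gradients, gives $\mu_j\mathfrak{h}_j(\overline{x})=c\,\xi_j\lambda_j(f_j(\overline{x})/g_j(\overline{x})-e_j)=0$, which is complementary slackness for $(P_3^\ast)$.

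The delicate point, and the step I expect to be the main obstacle, is the sign bookkeeping carried by $\xi_j$ on the constraints with $g_j(\overline{x})<0$ (the indices of $I_N$): there the factor $1/g_j(\overline{x})$ is negative, so one must track $\xi_j$ as the sign of $g_j(\overline{x})$ and verify that it cancels consistently in the gradient identity while \emph{simultaneously} leaving the dual-feasibility requirement $\mu_j\ge0$ intact (note $\xi_j/g_j(\overline{x})=1/\lvert g_j(\overline{x})\rvert>0$). This is the only place where the standing sign assumptions on the denominators $g_j$ are essential, and it is where I would be most careful; if the signs are coordinated correctly, $\mu$ is nonnegative, obeys complementary slackness, and inherits the stationarity of $(P_3)$ scaled by $c>0$, so that $\mu$ is a Lagrange multiplier of $(P_3^\ast)$ associated with $\overline{x}$, as claimed.
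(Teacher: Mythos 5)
Your proposal is correct and follows essentially the same route as the paper: both verify the KKT system of $(P_3^\ast)$ at $\overline{x}$ by combining the quotient rule with complementary slackness for $(P_3)$ to obtain $\nabla\mathfrak{L}(\overline{x},\mu)=c\,\nabla L(\overline{x},\lambda)$, the paper doing so by inserting the zero term $\mu_j\xi_j\bigl(f_j(\overline{x})/g_j(\overline{x})-e_j\bigr)(B_j\overline{x}+b_j)$ into the gradient sum where you instead split into active and inactive indices. The one place you go beyond the paper is dual feasibility: the paper's proof checks only (\ref{mfkkt1}) and (\ref{mfkkt2}), and your observation that $\xi_j/g_j(\overline{x})=1/\lvert g_j(\overline{x})\rvert>0$ is needed both for the sign of $\mu_j$ and to cancel the $\xi_j$ coming from $\nabla\mathfrak{h}_j=\xi_j(\nabla f_j-e_j\nabla g_j)$ correctly pinpoints that for $j\in I_N$ the multiplier must carry the factor $\xi_j$ (equivalently, $\lvert\frac{1}{2}\overline{x}^{T}B_{j}\overline{x}+b_{j}^{T}\overline{x}+d_{j}\rvert$ in the denominator rather than the signed quantity), which is exactly the form reflected by the absolute values appearing later in (\ref{GSC}).
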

\begin{proof}
Suppose that $\lambda=(\lambda_{j})\in\mathbb{R}^{m}_{+}$ is the Lagrange multiplier of ($P_3$) associated with $\overline{x}\in \tilde{D}$. To show $\lambda=(\lambda_{j})$ is the Lagrange multiplier of ($P_3^\ast$) associated with $\overline{x}\in \tilde{D}$, it suffices to obtain that the following two conditions are satisfied:
\begin{equation}\label{mfkkt1}
\mu_j\left( \frac{1}{2}\overline{x}^{T}\mathfrak{Q}_{j}\overline{x}+\mathfrak{q}_{j}^{T}\overline{x}+\mathfrak{r}_{j}\right)=0,~~\mbox{for all}~~j=1,2,3,\cdots,m\quad\text{and}
\end{equation}
\begin{equation}\label{mfkkt2}
\nabla \mathfrak{L}(\overline{x},\mu)=(\mathfrak{Q}_{0}\overline{x}+\mathfrak{q}_{0})+\sum_{j=1}^{m}\mu_j(\mathfrak{Q}_{j}\overline{x}+\mathfrak{q}_{j})=0.
\end{equation}
Since $\lambda=(\lambda_{j})$ is the Lagrange multiplier of ($P_3$) associated with $\overline{x}\in \tilde{D}$, we have
\begin{equation}\label{fkkt1}
\lambda_j\left(\frac{\frac{1}{2}\overline{x}^{T}A_{j}\overline{x}+a_{j}^{T}\overline{x}+c_{j}}{\frac{1}{2}\overline{x}^{T}B_{j}\overline{x}+b_{j}^{T}\overline{x}+d_{j}}-e_j\right)=0,~~\mbox{for all}~~j=1,2,3,\cdots,m 
\end{equation}
and
\begin{equation}\label{fkkt2}
\nabla L(\overline{x},\lambda)=\nabla\left( \frac{\frac{1}{2}\overline{x}^{T}A_{0}\overline{x}+a_{0}^{T}\overline{x}+c_{0}}{\frac{1}{2}\overline{x}^{T}B_{0}\overline{x}+b_{0}^{T}\overline{x}+d_{0}}\right) + \sum_{j=1}^{m}\lambda_{j}\nabla\left(\frac{\frac{1}{2}\overline{x}^{T}A_{j}\overline{x}+a_{j}^{T}\overline{x}+c_{j}}{\frac{1}{2}\overline{x}^{T}B_{j}\overline{x}+b_{j}^{T}\overline{x}+d_{j}}-e_j\right) =0.  
\end{equation}
The condition (\ref{mfkkt1}) trivially follows from (\ref{fkkt1}). One can easily see that $$\mu_j\left(\frac{\frac{1}{2}\overline{x}^{T}A_{j}\overline{x}+a_{j}^{T}\overline{x}+c_{j}}{\frac{1}{2}\overline{x}^{T}B_{j}\overline{x}+b_{j}^{T}\overline{x}+d_{j}}-e_j\right)=0,~~\mbox{for all}~~j=1,2,3,\cdots,m. $$ Then for any $i=1,2,3,\cdots,n$,
\begin{eqnarray*}
	\nabla \mathfrak{L}(\overline{x},\mu)_i
	&=&(\mathfrak{Q}_{0}\overline{x}+\mathfrak{q}_{0})_i+\sum_{j=1}^{m}\mu_j(\mathfrak{Q}_{j}\overline{x}+\mathfrak{q}_{j})_i\\
	&=& (\mathfrak{Q}_{0}\overline{x}+\mathfrak{q}_{0})_i+\sum_{j=1}^{m}\mu_j\left[ (\mathfrak{Q}_{j}\overline{x}+\mathfrak{q}_{j})+\xi_j\left(\frac{\frac{1}{2}\overline{x}^{T}A_{j}\overline{x}+a_{j}^{T}\overline{x}+c_{j}}{\frac{1}{2}\overline{x}^{T}B_{j}\overline{x}+b_{j}^{T}\overline{x}+d_{j}}-e_j\right)(B_{j}\overline{x}+b_{j})\right]_i \\
	&=& (\mathfrak{Q}_{0}\overline{x}+\mathfrak{q}_{0})_i+\sum_{j=1}^{m}\mu_j\left[ (\mathfrak{Q}_{j}\overline{x}+\mathfrak{q}_{j})+\left(\frac{\frac{1}{2}\overline{x}^{T}\mathfrak{Q}_{j}\overline{x}+\mathfrak{q}_{j}^{T}\overline{x}+\mathfrak{r}_{j}}{\frac{1}{2}\overline{x}^{T}B_{j}\overline{x}+b_{j}^{T}\overline{x}+d_{j}}\right)(B_{j}\overline{x}+b_{j})\right]_i \\
	&=&(A_0\overline{x}+a_0)_i+s(\overline{x})(B_0\overline{x}+b_0)_i \sum_{j=1}^{m}\mu_j(\frac{1}{2}\overline{x}^{T}B_{j}\overline{x}+b_{j}^{T}\overline{x}+d_{j})\nabla\left[\frac{\frac{1}{2}\overline{x}^{T}\mathfrak{Q}_{j}\overline{x}+\mathfrak{q}_{j}^{T}\overline{x}+\mathfrak{r}_{j}}{\frac{1}{2}\overline{x}^{T}B_{j}\overline{x}+b_{j}^{T}\overline{x}+d_{j}}\right]_i\\
	&=&c\nabla\left( \frac{\frac{1}{2}\overline{x}^{T}A_{0}\overline{x}+a_{0}^{T}\overline{x}+c_{0}}{\frac{1}{2}\overline{x}^{T}B_{0}\overline{x}+b_{0}^{T}\overline{x}+d_{0}}\right)_i + c\sum_{j=1}^{m}\lambda_j\nabla\left[\frac{\frac{1}{2}\overline{x}^{T}\mathfrak{Q}_{j}\overline{x}+\mathfrak{q}_{j}^{T}\overline{x}+\mathfrak{r}_{j}}{\frac{1}{2}\overline{x}^{T}B_{j}\overline{x}+b_{j}^{T}\overline{x}+d_{j}}\right]_i\\
	&=&c\nabla L(\overline{x},\lambda)_i=0.
\end{eqnarray*}
This concludes the result.
\end{proof}
\subsection{Local necessary optimality condition for ($P_3$)}
In this section, we are going to derive a local necessary optimality condition for ($P_3$) as follows:
\begin{theorem}\label{OPTCdn}
	Let $\overline{x}\in\tilde{D}$ be a local minimizer of ($P_3$). Then 
	\begin{equation}\label{lcl2}
	\tilde{\chi}_{i}(\overline{x})\left(\left[ \frac{(A_{0}\overline{x}+a_{0})_{i}-s(\overline{x})(B_{0}\overline{x}+b_{0})_{i}}{\frac{1}{2}\overline{x}^{T}B_{0}\overline{x}+b_{0}^{T}\overline{x}+d_{0}}\right]+ \sum_{j=1}^{m}\xi_j\lambda_{j}\left[ \frac{(A_{j}\overline{x}+a_{j})_{i}-e_j(B_{j}\overline{x}+b_{j})_{i}}{\frac{1}{2}\overline{x}^{T}B_{j}\overline{x}+b_{j}^{T}\overline{x}+d_{j}}\right]\right)\leq0;
	\end{equation}	
	where $\lambda_{j}\in\mathbb{R}^+; j=1,2,3,\cdots,m$ are the Lagrangian multipliers associated with $\overline{x}\in\tilde{D}$.
\end{theorem}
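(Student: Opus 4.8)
The plan is to exploit the quadratic reformulation $(P_3^\ast)$ together with the local necessary condition for quadratic problems already established in Theorem \ref{TlcQP}. First I would record that, because $g_0(x)>0$ on $D$, the identity (\ref{lomin}) gives $\mathfrak{h}_0(x)-\mathfrak{h}_0(\overline{x})=g_0(x)\,(s(x)-s(\overline{x}))$, which has the same sign as $s(x)-s(\overline{x})$; hence $\overline{x}$ is a local minimizer of $(P_3)$ if and only if it is a local minimizer of the quadratic problem $(P_3^\ast)$. Since $(P_3^\ast)$ is exactly of the form $(P_1)$ with data $\mathfrak{Q}_j,\mathfrak{q}_j,\mathfrak{r}_j$, and since Lemma \ref{LM} identifies the associated Lagrange multipliers as the $\mu_j$ (with $\mu_0=1$), the quadratic machinery applies directly.

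Next I would invoke Theorem \ref{TlcQP} for $(P_3^\ast)$, which yields, for every $i\in I$,
\[
\chi_i(\overline{x})\sum_{j=0}^{m}\mu_j(\mathfrak{Q}_j\overline{x}+\mathfrak{q}_j)_i\le 0,
\]
where $\chi_i$ is formed from the Lagrangian $\mathfrak{L}$ of $(P_3^\ast)$. The heart of the argument is then to translate the two factors back into the fractional data. Using $\mathfrak{Q}_j=\xi_j(A_j-e_jB_j)$ and $\mathfrak{q}_j=\xi_j(a_j-e_jb_j)$, I would expand
\[
\mathfrak{Q}_j\overline{x}+\mathfrak{q}_j=\xi_j\big[(A_j\overline{x}+a_j)-e_j(B_j\overline{x}+b_j)\big],
\]
which is valid for $j=0$ as well once one sets $e_0=s(\overline{x})$ and $\xi_0=1$. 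Substituting the value of $\mu_j$ from Lemma \ref{LM} and pulling out the scalar $c=\tfrac12\overline{x}^{T}B_0\overline{x}+b_0^{T}\overline{x}+d_0=g_0(\overline{x})>0$, the sum collapses to exactly $c$ times the bracketed quantity appearing in (\ref{lcl2}).

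It then remains to match $\chi_i(\overline{x})$ with $\tilde\chi_i(\overline{x})$ from (\ref{eq1QF}). When $\overline{x}_i\in\{u_i,v_i\}$ the two coincide by definition (both equal $-1$ or $+1$), so dividing the displayed inequality by $c>0$ already produces (\ref{lcl2}) in those cases. When $\overline{x}_i\in(u_i,v_i)$, I would use the identity $\nabla\mathfrak{L}(\overline{x},\mu)_i=c\,\nabla L(\overline{x},\lambda)_i$ obtained inside the proof of Lemma \ref{LM}: its left-hand side is $\chi_i(\overline{x})$ while, by the definition (\ref{eq1QF}), $\tilde\chi_i(\overline{x})=c\,\nabla L(\overline{x},\lambda)_i$, so again $\chi_i(\overline{x})=\tilde\chi_i(\overline{x})$. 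Combining the three cases and cancelling the positive factor $c$ delivers the asserted inequality (\ref{lcl2}).

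The step I expect to be the main obstacle is the sign and scalar bookkeeping in this translation. One must keep careful track of the indicators $\xi_j$ (equivalently, the sign of each $g_j(\overline{x})$), confirm that the reformulated multipliers $\mu_j$ are genuinely nonnegative so that Theorem \ref{TlcQP} truly applies, and verify that $c=g_0(\overline{x})$ is positive so that division preserves the inequality and so that the interior-case identification $\chi_i=\tilde\chi_i$ is legitimate. Once these sign conventions are pinned down, the remaining manipulations reduce to the same routine three-case analysis used in Theorem \ref{TlcQP}.
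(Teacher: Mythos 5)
Your proposal is correct and follows essentially the same route as the paper: pass to the quadratic reformulation $(P_3^\ast)$, apply the quadratic local necessary condition with the multipliers $\mu_j$ from Lemma \ref{LM}, and translate back by substituting $\mathfrak{Q}_j=\xi_j(A_j-e_jB_j)$, $\mathfrak{q}_j=\xi_j(a_j-e_jb_j)$ and dividing by $c=g_0(\overline{x})>0$. The only cosmetic difference is that you invoke Theorem \ref{TlcQP} directly, whereas the paper re-runs the three-case argument of Lemma \ref{lem1} on $(P_3^\ast)$ before performing the same translation.
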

\begin{proof}
We shall show that, for each $i\in I$,
\begin{equation}\label{eq1QF1}
\sum_{j=0}^{m}\mu_{j}(\mathfrak{Q}_{j}\overline{x}+\mathfrak{q}_{j})_{i}(t-\overline{x}_{i})\geq0,~~\forall\,t\in[u_{i},v_{i}];
\end{equation}
where $\mu_0=1$. For: let $i\in I$ and $y\in[u_{i},v_{i}]$ and define the vector $x=(x_{k})\in\mathbb{R}^{n}$ such that
$$x_{k}:=\left\{
\begin{array}{ll}
t & \mathrm{if}\ k=i \\
\overline{x}_{k} & \mathrm{if}\ k\neq i.\\
\end{array}%
\right. $$
Then clearly $x\in\Delta=\{x=(x_{i})\in\mathbb{R}^{n}~|~x_{i}\in[u_{i},v_{i}], ~i\in I\}$. Hence by the Lemma (\ref{lem1}), we have
$$\sum_{j=0}^{m}\mu_{j}(\mathfrak{Q}_{j}\overline{x}+\mathfrak{q}_{j})_{i}(t-\overline{x}_{i})\geq0,~~\forall\,t\in[u_{i},v_{i}].$$
Let $i\in I$ arbitrarily. \\
\textbf{Case-1:} If $\overline{x}_{i}=u_{i}$, then choose $t=v_{i}$. From (\ref{eq1QF}), we have
$$\tilde{\chi}_{i}(\overline{x})\sum_{j=0}^{m}\mu_{j}(\mathfrak{Q}_{j}\overline{x}+\mathfrak{q}_{j})_{i}\leq0.$$
\textbf{Case-2:} If $\overline{x}_{i}=v_{i}$, then choose $t=u_{i}$. From (\ref{eq1QF}), we have
$$\tilde{\chi}_{i}(\overline{x})\sum_{j=0}^{m}\mu_{j}(\mathfrak{Q}_{j}\overline{x}+\mathfrak{q}_{j})_{i}\leq0.$$
\textbf{Case-3:} If $\overline{x}_i\in (u_{i} ,v_{i} )$, then on the one hand, choose $t=u_{i}$. From (\ref{eq1QF}), we have
\begin{equation}\label{eq2}
\sum_{j=0}^{m}\mu_{j}(\mathfrak{Q}_{j}\overline{x}+\mathfrak{q}_{j})_{i}\leq0.
\end{equation}
On the other hand, choose $t=v_{i}$, the from (\ref{eq1QF}), we have
\begin{equation}\label{eq3}
\sum_{j=0}^{m}\mu_{j}(\mathfrak{Q}_{j}\overline{x}+\mathfrak{q}_{j})_{i}\geq0.
\end{equation}
From (\ref{eq2}) and (\ref{eq3}), we have $$\tilde{\chi}_{i}(\overline{x})=\nabla L(\overline{x},\lambda)_i=0.$$
Thus $$\tilde{\chi}_{i}(\overline{x})\sum_{j=0}^{m}\mu_{j}(\mathfrak{Q}_{j}\overline{x}+\mathfrak{q}_{j})_{i}=0.$$
Hence by the summary of all above three cases, we have
$$\tilde{\chi}_{i}(\overline{x})\left(\sum_{j=0}^{m}\mu_{j}(\mathfrak{Q}_{j}\overline{x}+\mathfrak{q}_{j})_{i}\right)\leq0,~~\forall\,i\in I.$$
Therefore, since $c=\frac{1}{2}\overline{x}^{T}B_{0}\overline{x}+b_{0}^{T}\overline{x}+d_{0}>0$ and $\mu_j=c\lambda_j(\frac{1}{2}\overline{x}^{T}B_{j}\overline{x}+b_{j}^{T}\overline{x}+d_{j})^{-1}$, for all $j=1,2,3,\cdots,m$, it follows that, for each $i\in I$
$$	\tilde{\chi}_{i}(\overline{x})\left(\left[ \frac{(A_{0}\overline{x}+a_{0})_{i}-s(\overline{x})(B_{0}\overline{x}+b_{0})_{i}}{\frac{1}{2}\overline{x}^{T}B_{0}\overline{x}+b_{0}^{T}\overline{x}+d_{0}}\right]+ \sum_{j=1}^{m}\xi_j\lambda_{j}\left[ \frac{(A_{j}\overline{x}+a_{j})_{i}-e_j(B_{j}\overline{x}+b_{j})_{i}}{\frac{1}{2}\overline{x}^{T}B_{j}\overline{x}+b_{j}^{T}\overline{x}+d_{j}}\right]\right) \leq0.$$
\end{proof}
\subsection{Verifiable global sufficient optimality condition for ($P_3$)}
In this section, we derive a global sufficient optimality condition for ($P_3$) as follows:
\begin{theorem}
	Let $\overline{x}\in \tilde{D}$ is a local minimizer of ($P_3$) and $\lambda_{0}=1$. If  $\lambda_{j}\in\mathbb{R}^{+};~j\in \{1,2,\cdots,m\}$ are the Lagrangian multipliers associated with $\overline{x}$ and
	\begin{equation}\label{GSC}
	\sum_{j=0}^{m}\left\lvert\dfrac{\lambda_j}{\frac{1}{2}\overline{x}^{T}B_{j}\overline{x}+b_{j}^{T}\overline{x}+d_{j}}\right\rvert\{[A_j-\mathrm{diag}(N_1,\cdots,N_n)]-e_j[B_j-\mathrm{diag}(D_1,\cdots,D_n)]\}\succeq0,
	\end{equation}
	where $N_i=2\tilde{\chi}_i(\overline{x})\dfrac{(A_{j}\overline{x}+a_{j})_{i}}{(v_{i}-u_{i})}$ and $D_i=2\tilde{\chi}_i(\overline{x})\dfrac{(B_{j}\overline{x}+b_{j})_{i}}{(v_{i}-u_{i})}$, for all $i\in\{1,2,3,\cdots,n\}$, then $\overline{x}$ is a global minimizer of ($P_3$).
\end{theorem}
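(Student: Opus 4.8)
The plan is to avoid any fresh analysis and instead transport the already-established quadratic sufficiency Theorem~\ref{tscq} to $(P_3)$ through the reformulation $(P_3^\ast)$. The excerpt has already done the conceptual work: equation (\ref{lomin}) gives $\mathfrak{h}_0(x)-\mathfrak{h}_0(\overline{x})=g_0(x)(s(x)-s(\overline{x}))$ with $g_0(x)>0$ on $D$, so $\overline{x}$ is a global minimizer of $(P_3)$ if and only if it is a global minimizer of the genuine quadratic problem $(P_3^\ast)$. Hence it suffices to verify the hypothesis (\ref{SCQP}) of Theorem~\ref{tscq} for $(P_3^\ast)$ and then show that this transported condition is literally (\ref{GSC}).

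First I would apply Theorem~\ref{tscq} to $(P_3^\ast)$, whose data are the matrices $\mathfrak{Q}_j$, vectors $\mathfrak{q}_j$, and, by Lemma~\ref{LM}, multipliers $\mu_j=c\lambda_j(\tfrac12\overline{x}^TB_j\overline{x}+b_j^T\overline{x}+d_j)^{-1}$ with $c=\tfrac12\overline{x}^TB_0\overline{x}+b_0^T\overline{x}+d_0>0$; note that the same formula with $\lambda_0=1$ returns $\mu_0=1$, consistent with the Lagrangian convention for $(P_3^\ast)$. The sufficient condition produced by Theorem~\ref{tscq} reads
\[
\sum_{j=0}^{m}\mu_{j}\left[\mathfrak{Q}_{j}-\mathrm{diag}\left(2\chi^{\ast}_{1}(\overline{x})\frac{(\mathfrak{Q}_{j}\overline{x}+\mathfrak{q}_{j})_{1}}{v_{1}-u_{1}},\cdots,2\chi^{\ast}_{n}(\overline{x})\frac{(\mathfrak{Q}_{j}\overline{x}+\mathfrak{q}_{j})_{n}}{v_{n}-u_{n}}\right)\right]\succeq0,
\]
where $\chi^{\ast}$ is the selection function (\ref{eq4}) built from $\nabla\mathfrak{L}(\overline{x},\mu)$. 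The crucial observation is that $\chi^{\ast}$ coincides with $\tilde{\chi}$ of (\ref{eq1QF}): on the boundary both equal $\pm1$, while in the interior the chain of equalities inside the proof of Lemma~\ref{LM} yields $\nabla\mathfrak{L}(\overline{x},\mu)_i=c\,\nabla L(\overline{x},\lambda)_i$, which is exactly the interior value defining $\tilde{\chi}_i$.

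Next I would substitute $\mathfrak{Q}_j=\xi_j(A_j-e_jB_j)$ and $\mathfrak{q}_j=\xi_j(a_j-e_jb_j)$, giving $(\mathfrak{Q}_j\overline{x}+\mathfrak{q}_j)_i=\xi_j[(A_j\overline{x}+a_j)_i-e_j(B_j\overline{x}+b_j)_i]$. The sign bookkeeping is that $g_j(\overline{x})=\tfrac12\overline{x}^TB_j\overline{x}+b_j^T\overline{x}+d_j$ has sign $\xi_j$, so $\xi_j/g_j(\overline{x})=1/\lvert g_j(\overline{x})\rvert$ and, since $\lambda_j\ge0$, both $\mu_j\mathfrak{Q}_j=c\,\lvert\lambda_j/g_j(\overline{x})\rvert(A_j-e_jB_j)$ and $\mu_j(\mathfrak{Q}_j\overline{x}+\mathfrak{q}_j)_i=c\,\lvert\lambda_j/g_j(\overline{x})\rvert[(A_j\overline{x}+a_j)_i-e_j(B_j\overline{x}+b_j)_i]$; every factor $\xi_j$ is absorbed into the absolute value. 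Writing the diagonal as $\mathrm{diag}(N_1,\cdots,N_n)-e_j\,\mathrm{diag}(D_1,\cdots,D_n)$ with $N_i,D_i$ as in the statement, the transported condition becomes $c$ times $\sum_{j=0}^m\lvert\lambda_j/g_j(\overline{x})\rvert\{[A_j-\mathrm{diag}(N_1,\cdots,N_n)]-e_j[B_j-\mathrm{diag}(D_1,\cdots,D_n)]\}\succeq0$; dividing by the positive scalar $c$ yields precisely (\ref{GSC}).

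I expect the main obstacle to be the sign and scaling bookkeeping rather than any conceptual step: one must check that the $\xi_j$ appearing in both $\mathfrak{Q}_j$ and in $\mathrm{sgn}\,g_j(\overline{x})$ collapse into the single absolute value $\lvert\lambda_j/g_j(\overline{x})\rvert$ in \emph{both} the matrix term and the diagonal term, and that the common positive factor $c$ factors out uniformly so that positive semidefiniteness is unaffected. The identification $\chi^{\ast}=\tilde{\chi}$ via $\nabla\mathfrak{L}(\overline{x},\mu)=c\,\nabla L(\overline{x},\lambda)$ must be invoked explicitly, since it is what makes the transported inequality match (\ref{GSC}) term by term. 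With these two checks in place the result follows immediately from Theorem~\ref{tscq} and the local/global equivalence of $(P_3)$ and $(P_3^\ast)$.
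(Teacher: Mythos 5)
Your proposal is correct and follows essentially the same route as the paper: apply Theorem~\ref{tscq} to $(P_3^\ast)$ with the multipliers from Lemma~\ref{LM}, substitute $\mathfrak{Q}_j=\xi_j(A_j-e_jB_j)$ and $\mathfrak{q}_j=\xi_j(a_j-e_jb_j)$, absorb $\xi_j/g_j(\overline{x})$ into the absolute value, and divide out the positive factor $c$. If anything, you are slightly more careful than the paper, which silently uses $\tilde{\chi}$ in the transported condition without spelling out the identification $\nabla\mathfrak{L}(\overline{x},\mu)_i=c\,\nabla L(\overline{x},\lambda)_i$ that you make explicit.
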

\begin{proof}
Since $A_{j}=(a_{st}^{(j)})$ and $B_{j}=(b_{st}^{(j)})$ are $n\times n$ symmetric matrices, we have $\mathfrak{Q}_{j}=\xi_j(A_j-e_jB_j)$ is also a  symmetric matrix. By applying Theorem (\ref{tscq}) to ($P_3^\ast$), we have \begin{equation}\label{SC1}
\sum_{j=0}^{m}\mu_{j}\left[\mathfrak{Q}_{j}-\mathrm{diag}\left(2\tilde{\chi}_{1}(\overline{x})\frac{(\mathfrak{Q}_{j}\overline{x}+\mathfrak{q}_{j})_{1}}{(v_{1}-u_{1})},
\cdots,2\tilde{\chi}_{n}(\overline{x})\frac{(\mathfrak{Q}_{j}\overline{x}+\mathfrak{q}_{j})_{n}}{(v_{n}-u_{n})}\right)\right]\succeq0
\end{equation}
is a sufficient condition for $\overline{x}$ to be a global minimizers of ($P_3^\ast$). Thus (\ref{SC1}) produces a sufficient optimality condition for ($P_3$) as well. We shall now reduce (\ref{SC1}) into the desired form (\ref{GSC}) by making the substitutions  $\mathfrak{Q}_{j}=\xi_j(A_j-e_jB_j)$  and $\mathfrak{q}_{j}=\xi_j(a_j-e_jb_j)$ into (\ref{SC1}). Since $c>0$, we get
$$\sum_{j=0}^{m}\lambda_j\xi_j\left(\frac{1}{2}\overline{x}^{T}B_{j}\overline{x}+b_{j}^{T}\overline{x}+d_{j}\right)
^{-1}\{[A_j-\mathrm{diag}(N_1,\cdots,N_n)]-e_j[B_j-\mathrm{diag}(D_1,\cdots,D_n)]\}\succeq0,$$
where $N_i=2\tilde{\chi}_i(\overline{x})\dfrac{(A_{j}\overline{x}+a_{j})_{i}}{(v_{i}-u_{i})}$ and $D_i=2\tilde{\chi}_i(\overline{x})\dfrac{(B_{j}\overline{x}+b_{j})_{i}}{(v_{i}-u_{i})}$, for all $i\in\{1,2,3,\cdots,n\}$. Since for each $j\in\{1,2,3,\cdots,m\}$, $$\xi_j\left(\frac{1}{2}\overline{x}^{T}B_{j}\overline{x}+b_{j}^{T}\overline{x}+d_{j}\right)
^{-1}= \left\lvert\dfrac{1}{\frac{1}{2}\overline{x}^{T}B_{j}\overline{x}+b_{j}^{T}\overline{x}+d_{j}}\right\rvert,$$ the global sufficient condition becomes
$$\sum_{j=0}^{m}\left\lvert\dfrac{\lambda_j}{\frac{1}{2}\overline{x}^{T}B_{j}\overline{x}+b_{j}^{T}\overline{x}+d_{j}}\right\rvert\{[A_j-\mathrm{diag}(N_1,\cdots,N_n)]-e_j[B_j-\mathrm{diag}(D_1,\cdots,D_n)]\}\succeq0.$$
Hence the Theorem follows.
\end{proof}
Let us illustrate the significance of the above optimality condition.
\begin{example}
	Consider the following quadratic non-convex minimization problem ($E_4$):
	\begin{eqnarray*}
		\mbox{($E_4$)~~}~~\min_{(x_1,x_2)^\intercal\in\mathbb{R}^{2}} f\begin{pmatrix}
			x_1 \\ x_2
		\end{pmatrix}&=&\min_{(x_1,x_2)^\intercal\in\mathbb{R}^{2}} \frac{-x_1^2-x_2^2-x_1x_2+x_1}{x_2^2+1}\\
		\mbox{s.t.~~}~~ g\begin{pmatrix}
			x_1 \\ x_2
		\end{pmatrix} &=&\frac{x_1^2+2x_2^2-1}{x_2^2+1}\leq 1,\\
		&&x_1\in [-1,1],\\
		&&x_2\in \{-1,1\}.
	\end{eqnarray*}
	We can write $f\begin{pmatrix}
	x_1 \\ x_2
	\end{pmatrix}$ and $g\begin{pmatrix}
	x_1 \\ x_2
	\end{pmatrix}$ in the form 
	$$f(\mathbf{x})=\frac{\dfrac{1}{2}\mathbf{x}^\intercal A_0\mathbf{x}+\mathbf{a}_0^\intercal\mathbf{x}+c_0}{\dfrac{1}{2}\mathbf{x}^\intercal B_0\mathbf{x}+\mathbf{b}_0^\intercal\mathbf{x}+d_0}\mbox{~~and~~}g(\mathbf{x})=\frac{\dfrac{1}{2}\mathbf{x}^\intercal A_1\mathbf{x}+\mathbf{a}_1^\intercal\mathbf{x}+c_1}{\dfrac{1}{2}\mathbf{x}^\intercal B_1\mathbf{x}+\mathbf{b}_1^\intercal\mathbf{x}+d_1}$$  respectively, where $\mathbf{x}=\begin{pmatrix}
	x_1 \\ x_2
	\end{pmatrix}$, $A_0=\begin{bmatrix}
	-2 & -1\\ -1 & -2
	\end{bmatrix}$, $A_1=\begin{bmatrix}
	2 & 0\\ 0 & 4
	\end{bmatrix}$, $\mathbf{a}_0=\begin{pmatrix}
	1 \\ 0
	\end{pmatrix}$, $\mathbf{a}_1=\begin{pmatrix}
	0 \\ 0
	\end{pmatrix}$, $B_0=B_1=\begin{bmatrix}
	0 & 0\\ 0 & 2
	\end{bmatrix}$, $\mathbf{b}_0=\mathbf{b}_1=\begin{pmatrix}
	0 \\ 0
	\end{pmatrix}$, $c_0=0$, $c_1=-2$ and $d_0=d_1=1$. Let $u_1=u_2=-1$ and $v_1=v_2=1$.
	The Lagrangian of ($E_4$) is 
	\begin{equation}\label{lagE4}
	L(\mathbf{x},\lambda,\mu,\nu)=\frac{-x_1^2-x_2^2-x_1x_2+x_1}{x_2^2+1}+\lambda\left(\frac{x_1^2+x_2^2-2}{x_2^2+1}\right)+\mu(x_1^2-1)+\nu(x_2^2-1).
	\end{equation}
	Apply the KKT conditions for ($E_4$). Then,
	\begin{itemize}
		\item[(1)] $\dfrac{\partial L(\mathbf{x},\lambda,\mu,\nu)}{\partial x_1}=\dfrac{-2x_1-x_2+1}{x_2^2+1}+\dfrac{2\lambda x_1}{x_2^2+1} +2\mu x_1=0$,
		\item[(2)] $\lambda,\mu\geq0$, $\dfrac{x_1^2+x_2^2-2}{x_2^2+1}\leq 0$, $(x_1^2-4)\leq0$,
		\item[(3)] $\lambda\left(\dfrac{x_1^2+x_2^2-2}{x_2^2+1}\right)=0$,
		\item[(4)] $\mu(x_1^2-1)=0$,
		\item[(5)] $\nu=0$ and $x_2=-1$ or $x_2=1$.
	\end{itemize}
	\textbf{Case 1:} $\lambda\ne0$ or $\mu\ne0$. In this case, there are two sets of solutions $\{(x_1,x_2,\lambda,\mu,0)~\mid~x_1=\pm1, x_2=1,\lambda+2\mu=1~~\mbox{and}~~\lambda,\mu\geq0\}$ and $\{(-1,-1,\lambda,\mu,0)~\mid~\lambda+2\mu=2~~\mbox{and}~~\lambda,\mu\geq0\}$ of KKT, for the problem ($E_4$).\\\\
	\textbf{Case 2:} $\lambda=0$ and $\mu=0$. In this case, there are two solutions $(x_1,x_2,\lambda,\mu,0)=(1,-1,0,0,0)$ and $(0,1,0,0,0)$.
	
	Let $\overline{\mathbf{x}}_1=(1,1)^\intercal,\overline{\mathbf{x}}_2=(-1,1)^\intercal,\overline{\mathbf{x}}_3=(-1,-1)^\intercal,\overline{\mathbf{x}}_4=(1,-1)^\intercal$ and $\overline{\mathbf{x}}_5=(0,1)^\intercal$. Also notice that the linear independence constraint qualification is satisfied in this problem,  for $\overline{\mathbf{x}}_i\,:i=1,2,3,\cdots,5$. Direct calculations show that $\overline{\mathbf{x}}_3=(-1,-1)^\intercal$ satisfies the condition (\ref{GSC}) for all $\lambda\in[0,2]$. Other local minimizers do not satisfy the condition (\ref{GSC}). So the point $\overline{\mathbf{x}}_3=(-1,-1)^\intercal$ is the global minimizer.
\end{example}
\section{Conclusion} 
The sufficient global optimality criteria presented in this paper are useful to distinguish global minimizers and local minimizers. A number of numerical schemes are already been available to locate
local minimizers based on the KKT condition. Global optimization problems with mixed variables are inherently hard. In this context, the criteria developed in this paper to identify global minimizers
among the local minimizers would be useful for computational purposes.

\end{document}